\let\c@author\relax
\DeclareMathOperator{\Dom}{Val}
\DeclareMathOperator{\Spec}{Spec}
\DeclareMathOperator{\Zar}{Zar}
\newtheorem{theorem}{Theorem}
\newtheorem{proposition}[theorem]{Proposition}
\newtheorem{corollary}[theorem]{Corollary}
\newtheorem{lemma}[theorem]{Lemma}
\theoremstyle{definition}
\newtheorem{notation}[theorem]{Notation}
\newtheorem{example}[theorem]{Example}
\numberwithin{theorem}{section}
\begin{document}

\begin{frontmatter}



\title{A connectedness theorem for spaces of valuation rings} 


\author[Heinzer]{William Heinzer}
\address[Heinzer]{Department of Mathematics, Purdue University, West
Lafayette, Indiana 47907-1395}
\ead{heinzer@purdue.edu}

\author[Loper]{K.~Alan Loper}
\address[Loper]{Department of Mathematics, Ohio State University -- Newark,  Newark, OH 43055}
\ead{lopera@math.ohio-state.edu}

\author[Olberding]{Bruce Olberding} 
\address[Olberding]{Department of Mathematical Sciences, New Mexico State University,
 Las Cruces, NM 88003-8001}
 \ead{bruce@nmsu.edu}

\author[Toeniskoetter]{Matthew Toeniskoetter} 
\address[Toeniskoetter]{Department of Mathematics and Statistics, Oakland University, Rochester, MI 48309-4479}
 \ead{toeniskoetter@oakland.edu}




\begin{abstract}
Let $F$ be a field,  let $D$ be a  
local subring of $F$, and let $\Dom_F(D)$ be the space of valuation rings of $F$ that dominate $D$. We lift Zariski's connectedness theorem for fibers of a projective morphism  to the Zariski-Riemann space of  valuation rings of $F$ by proving that 
 a subring $R$ of $F$ dominating $D$  is local, residually algebraic over $D$ and integrally closed in $F$ if and only if there is a closed and connected subspace $Z$  of $\Dom_F(D)$ such that $R$ is the  intersection of the rings in $Z$.
 Consequently, the intersection of the rings in any closed and connected subset of $\Dom_F(D)$ is a local ring. 
 In proving this, we also prove a converse to 
 Zariski's connectedness theorem. Our results do not  require   the rings involved to be Noetherian. 
%

\end{abstract}

\begin{keyword}
Connectedness theorem \sep
local ring \sep
valuation ring \sep
Zariski-Riemann space 

\MSC 13A18 \sep 13H99





\end{keyword}

\end{frontmatter}

\section{Introduction}

In one of its traditional forms, Zariski's Connectedness Theorem asserts that 
if 
$D$ is a Noetherian local
 subring of a field $F$ that is integrally closed in $F$  
and $f:X \rightarrow \Spec(D)$ is a dominant projective morphism of  integral schemes such that $X$ has
 function field contained in  $F$, then  the fiber of $f$ over the closed point of $\Spec(D)$ is closed and connected (cf.~\cite[Corollary~11.3, p.~279]{MR0463157}). 
 That $D$ is local and integrally closed is needed here, and part of the purpose of this article is to examine the relationship between these two properties and the connectedness of the closed fibers of projective morphisms. We do this by fixing the field $F$ and its subring $D$ and varying the projective morphisms $X \rightarrow \Spec(D)$.   
Taking the inverse limit of all such projective morphisms   for which $X$ has function field contained in $F$ results in the Zariski-Riemann space of valuation rings of $F$ that contain $D$. It is in this space that we consider the relationship between local rings and connectedness.


In more detail,
a {\it valuation ring}  of a field $F$ is a subring $V$ of $F$ such that for all $0 \ne x \in F$, $x \in V$ or $x^{-1} \in V$. 
If $D$ is a subring of $F$, then 
 the {\it Zariski-Riemann space} $\Zar_F(D)$ of $F$ is the space of valuation rings of $F$ that contain $D$ as a subring, where $\Zar_F(D)$ is endowed with the Zariski topology (see Section 2). If also $D$ is local, then $\Dom_F(D)$ is the subspace  of $\Zar_F(D)$ consisting of the valuation rings $V$    that {\it dominate} $D$, i.e.\ the maximal ideal of $V$ contains the maximal ideal of $D$. We show that Zariski's Connectedness Theorem lifts to $\Dom_F(D)$ when $D$ is a local ring that is integrally closed in $F$. Thanks to a recent non-Noetherian version of Stein factorization in The Stacks Project \cite[{Tag~03H2}]{stacks-project}, we do not need to assume $D$ is Noetherian; see Theorem~\ref{first direction}.  
 We also prove a converse to this and give a purely topological criterion for when an intersection of valuation rings is a local ring that is  residually algebraic over $D$. The main purpose of this note is to prove:

\begin{theorem}[Valuative Connectedness Theorem]\label{ValuativeConnectednessTheorem}
Let $F$ be a field, and let $D$ be a local subring of $F$.
A subring $R$ of $F$ dominating $D$ is local, residually algebraic over $D$, and integrally closed in $F$ if and only if there is a closed and connected subspace $Z$ of $\Dom_F(D)$ such that $R$ is the intersection of the rings in $Z$.
\end{theorem}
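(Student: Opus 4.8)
The plan is to prove the two implications separately, treating the forward direction as the genuinely hard part. For the forward direction, assume $R$ is local, residually algebraic over $D$, and integrally closed in $F$. The natural candidate for $Z$ is $\Dom_F(R)$, the set of valuation rings of $F$ dominating $R$; since $R$ is integrally closed in $F$, a standard valuation-theoretic argument (every integrally closed subring is the intersection of the valuation rings containing it, and domination is forced by localizing at the maximal ideal) gives $R = \bigcap_{V \in Z} V$. That $Z$ is closed in $\Dom_F(D)$ should follow from $R$ dominating $D$ together with the basic topology of the Zariski-Riemann space: $\Dom_F(R)$ is an intersection of basic closed sets determined by the elements of $R$. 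The crux is \emph{connectedness} of $Z$, and this is exactly where Theorem~\ref{first direction} (the non-Noetherian lift of Zariski's Connectedness Theorem, via Stein factorization from the Stacks Project) is meant to be applied: one approximates $\Dom_F(R)$ by the closed fibers of the projective morphisms $X \to \Spec(R)$ with function field in $F$, each of which is closed and connected by Theorem~\ref{first direction}, and then passes to the inverse limit. The main obstacle I anticipate is this limit step: connectedness is not automatically preserved under arbitrary inverse limits, so I would need the limit to be an inverse limit of a directed system of \emph{quasi-compact} (indeed spectral) spaces with \emph{closed} connected members, where a theorem on inverse limits of spectral spaces guarantees the limit of nonempty closed connected sets is nonempty closed and connected. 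One must also verify that $\Dom_F(R)$ genuinely is such an inverse limit — i.e.\ that it equals $\varprojlim$ of the closed fibers over the closed point of $\Spec(R)$ across all projective models — and that the residual algebraicity of $R$ over $D$ is what ensures these closed fibers sit inside $\Dom_F(D)$ and carry the subspace topology correctly.

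For the converse, assume $Z \subseteq \Dom_F(D)$ is closed and connected and set $R = \bigcap_{V \in Z} V$. Integral closure of $R$ in $F$ is immediate, since any intersection of valuation rings of $F$ is integrally closed in $F$. That $R$ dominates $D$ is clear because every $V \in Z$ does. The substantive claims are that $R$ is \emph{local} and \emph{residually algebraic over $D$}, and here I expect to invoke the promised converse to Zariski's Connectedness Theorem together with a Stein-factorization argument run in reverse: if $R$ were not local, or not residually algebraic over $D$, one should be able to exhibit a projective model $X \to \Spec(D)$ whose closed fiber (pulled back appropriately) disconnects, contradicting connectedness of $Z$. Concretely, non-locality of $R$ gives two distinct maximal ideals; choosing an element of $R$ lying in one but not the other and using it to build an idempotent-like decomposition of a suitable projective scheme over $D$ should split $Z$ into two nonempty closed pieces. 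Similarly, a residual transcendence would produce, via a well-chosen affine patch, a clopen partition of $Z$.

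I would organize the write-up as: (1) recall from Section 2 the relevant facts about the Zariski topology on $\Zar_F(D)$ and $\Dom_F(D)$, in particular that these are spectral spaces and that $\Dom_F(R)$ is closed in $\Dom_F(D)$ when $R$ dominates $D$; (2) prove the forward direction by realizing $\Dom_F(R)$ as an inverse limit of closed fibers and applying Theorem~\ref{first direction} plus the inverse-limit-of-spectral-spaces connectedness principle; (3) prove the converse using the stated converse to the connectedness theorem. The single hardest point, as noted, is the inverse limit step in (2): I would want a clean lemma to the effect that a filtered inverse limit of nonempty closed connected subsets of spectral spaces, with spectral transition maps, is nonempty closed and connected — this is the technical heart, and everything else is bookkeeping about models, function fields, and domination.
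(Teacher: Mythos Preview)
Your forward direction is essentially the paper's: take $Z=\Dom_F(R)$ and cite Theorem~\ref{first direction}. However, your reasoning for why $\Dom_F(R)$ is closed in $\Dom_F(D)$ is wrong. The condition $R\subseteq V$ is an intersection of basic \emph{open} sets $\mathcal{U}(r)$, not closed ones; the condition $\mathfrak{m}_R\subseteq\mathfrak{m}_V$ is an intersection of closed sets. So $\Dom_F(R)$ is only patch closed in general. What makes it Zariski closed is precisely residual algebraicity (Proposition~\ref{RA lemma}): since $R$ is integral over $D+\mathfrak{m}_R$, any $V\in\Dom_F(D)$ with $\mathfrak{m}_R\subseteq\mathfrak{m}_V$ automatically contains $R$, so the open conditions become redundant. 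Theorem~\ref{first direction} already packages closedness, so this doesn't break your outline, but you should know where the hypothesis is used.

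The genuine gap is in your converse. Your plan is: if $R=A(Z)$ had two maximal ideals, pick $x\in\mathfrak{m}_1$, $y\in\mathfrak{m}_2$ with $x+y=1$ and split $Z$ into $Z_1=\{V:x\in\mathfrak{m}_V\}$ and $Z_2=\{V:y\in\mathfrak{m}_V\}$. These are closed and disjoint, and each is nonempty (since $x,y$ are nonunits in $A(Z)$), but there is no reason they cover $Z$: a given $V\in Z$ may have both $x$ and $y$ as units. The obstruction is that you do not yet know the maximal ideals of $A(Z)$ are contracted from valuations in $Z$, nor even that $A(Z)$ is semilocal. The paper addresses this by first reducing to a Noetherian base (writing $D=\bigcup_i D_i$ with each $D_i$ essentially of finite type over the prime subring, so $\Spec D_i$ is Noetherian) and passing to projective models $X$ of $F/D_i$. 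There (Theorem~\ref{proj case}) the image $X(Z_i)$ has finitely many irreducible components with generic points $U_1,\dots,U_n$, and one proves that $A(X(Z_i))$ is \emph{semilocal with maximal ideals exactly} $\mathfrak{m}_{U_j}\cap A(X(Z_i))$. Only after this structural result does your $x+y=1$ disconnection argument go through, because now every $\alpha\in X(Z_i)$ lies below some $U_j$. Locality of $A(Z)$ then follows by a directed-union argument (Lemma~\ref{directed models}). Your sketch skips both the Noetherian reduction and the semilocality step, and without them the disconnection fails.

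Finally, residual algebraicity in the converse is much easier than you suggest: no clopen partition is needed. Since $Z$ is Zariski closed in $\Dom_F(D)$, it contains a valuation ring $V$ minimal over $D$, and any such $V$ is automatically residually algebraic over $D$; since $A(Z)\subseteq V$, the residue field of $A(Z)$ sits inside that of $V$.
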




Using the fact that the continuous image of a connected set is connected, along with the valuative criterion for properness, one can show that Theorem~\ref{ValuativeConnectednessTheorem} implies Zariski's Connectedness Theorem.
Thus one cannot expect to bypass the difficulty of proving Zariski's theorem by giving a direct proof of Theorem~\ref{ValuativeConnectednessTheorem}.

\begin{notation}
We use the following notation.
\begin{enumerate} 
\item
If $R$ is a local ring, then ${\mathfrak{m}}_R$ denotes its maximal ideal.
\item
If $D$ is a subring of a field $F$, then 
$\Zar_F(D)$ is the set of valuation rings of $F$ that contain $D$.
\item

The set $\Dom_F(D)$ is the set of valuation rings $V$ in $\Zar_F(D)$ such that $\mathfrak m_V \cap D$ is a maximal ideal of $D$, i.e.\ $V$ dominates a localization of $D$ at a maximal ideal.  
 \item
If $X$ is a collection of local subrings of $F$, then $A(X)$ is the intersection of these rings, and $J(X)$ is the intersection of the maximal ideals of these rings. 
\item
If $X$ is a projective model of $F/D$  and $Z$ is a subset of $\Zar_F(D)$, then $X(Z)$ denotes the local rings in $X$ that are dominated by a valuation ring in $Z$.
\end{enumerate}

\end{notation}

\section{Projective models and the Zariski-Riemann space}

 In this section, we review the topology of the Zariski-Riemann space of a field and its relationship to projective models of the field.
Rather than use the language of projective schemes, we use the language of projective models to make the interplay between projective schemes and the Zariski-Riemann space of valuation rings easier to work with.

Let $F$ be a field,  let $D$ be a subring of $F$, and let Loc$_F(D)$ be the set of local subrings of $F$ that contain $D$ as a subring and have $F$ as their quotient field. The {\it Zariski topology} on Loc$_F(D)$ has a basis of  open sets the sets of the form 
    $${\mathcal{U}}(x_1,\ldots,x_n):=\{R\in {\rm Loc}(F/D):x_1,\ldots,x_n \in R\},$$
where $x_1,\ldots,x_n\in F$.  
The Zariski topology on Loc$_F(D)$ is quasicompact and the basic open sets  ${\mathcal{U}}(x_1,\ldots,x_n)$ are also quasicompact. In fact, by  \cite[Corollary~2.14]{MR3565806} or \cite[Example~2.1(7)]{MR3565813}, Loc$_F(D)$ is  a {\it spectral space},    a quasicompact $T_0$ topological space such that every open set is a union of  quasicompact open sets; the intersection of two quasicompact open sets is a quasicompact open set; and every irreducible closed subset has a unique generic point.  
(By a theorem of Hochster \cite{MR251026}, spectral spaces are   the topological spaces that are homeomorphic to the prime spectrum of a commutative ring.)  

The topology of a spectral space  can be refined to a Hausdorff topology called the {\it patch topology}. This topology has as a basis of open  sets the set of the form ${\mathcal{U}} \cap {\mathcal{V}}$, where ${\mathcal{U}}$ is a quasicompact open set  and ${\mathcal{V}}$ is the complement of a quasicompact open set. The sets ${\mathcal{U}} \cap {\mathcal{V}}$ are both closed and open, and it follows that the patch topology on $X$ is quasicompact, Hausdorff and zero-dimensional. 
Closure of a set in a spectral space can be viewed as a generic closure of a patch closure. To state this, we recall that the {\it specialization order} on $X$ is given by $x \leq y$ if and only if $y$ is in the closure of $\{x\}$ with respect to the spectral topology on $X$.  
The next lemma is well known; a proof can be found in 
\cite[
{Tag 0903}]{stacks-project}.

\begin{lemma}  \label{generic closure} Let $Y$ be patch closed subspace of a spectral space $X$. The closure of a subspace $Y$ of $X$ in the spectral topology is the set of all $x \in X$ such that $y \leq x$ for some $y \in Y$. %
%
\end{lemma}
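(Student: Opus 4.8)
The plan is to prove the two inclusions separately and to observe that only the harder one uses that $Y$ is patch closed. Throughout, closure without qualification means closure in the spectral topology, and I recall that $x \le y$ means $y \in \overline{\{x\}}$.

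First I would dispatch the inclusion $\{x \in X : y \le x \text{ for some } y \in Y\} \subseteq \overline{Y}$. If $y \in Y$ and $y \le x$, then by definition of the specialization order $x \in \overline{\{y\}} \subseteq \overline{Y}$, since $y \in Y$. This direction uses nothing about $Y$ beyond $Y \subseteq \overline{Y}$.

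For the reverse inclusion, fix $x \in \overline{Y}$; the goal is to produce $y \in Y$ with $x \in \overline{\{y\}}$, equivalently, with $y$ lying in every open neighborhood of $x$. Set $G_x := \bigcap\{U : U \text{ is a quasicompact open set with } x \in U\}$, the set of generizations of $x$; since the quasicompact open sets form a basis for the spectral topology, $z \in G_x$ exactly when $x \in \overline{\{z\}}$, i.e.\ when $z \le x$. Each quasicompact open $U$ is clopen in the patch topology, so $G_x$ is patch closed. As $Y$ is patch closed and $X$ is patch quasicompact, $Y \cap G_x$ is patch quasicompact. The crux is to show $Y \cap G_x \neq \emptyset$, for then any point of it is the desired $y$. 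Suppose $Y \cap G_x = \emptyset$. Then the patch-open sets $X \setminus U$, as $U$ ranges over quasicompact opens containing $x$, cover $Y$; by patch quasicompactness finitely many suffice, say $Y \subseteq (X \setminus U_1) \cup \cdots \cup (X \setminus U_n)$. Then $U := U_1 \cap \cdots \cap U_n$ is a quasicompact open neighborhood of $x$ with $U \cap Y = \emptyset$, contradicting $x \in \overline{Y}$. Hence $Y \cap G_x \neq \emptyset$, which finishes the argument.

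The main obstacle — really the only nonroutine step — is recognizing that one should intersect $Y$ with the generization set $G_x$ and then invoke patch quasicompactness: although $G_x$ is in general not open (it is an intersection of possibly infinitely many opens), it is patch closed, which is exactly what makes the finite-subcover argument available. Everything else is bookkeeping with the definitions of the spectral and patch topologies recalled above, together with the facts that quasicompact opens form a basis and are patch clopen, and that $X$ is patch quasicompact.
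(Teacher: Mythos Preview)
Your proof is correct and is essentially the standard argument: the paper does not give its own proof but cites the Stacks Project \cite[Tag 0903]{stacks-project}, whose argument is precisely this patch-compactness reduction (intersect $Y$ with the set of generizations of $x$, then use that quasicompact opens are patch clopen and that $Y$ is patch quasicompact to extract a finite subcover). One small expository wrinkle: you note that $Y \cap G_x$ is patch quasicompact but then apply the finite-subcover step to $Y$ itself; either works, since both are patch closed in the patch-compact space $X$.
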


If $X$ is a subset of Loc($F/D$), 
 then the map $f:X \rightarrow \Spec  D $ that sends a local ring $R$ in $X$ to the prime ideal ${\mathfrak{m}}_R \cap D$ of $D$ is a \emph{spectral map}, i.e.\ continuous with respect to both the Zariski and patch topologies.
 In particular, if $X = \{D_P:P \in \Spec D\}$, then the map $X \rightarrow \Spec D $ is a homeomorphism, and so the Zariski topology on Loc$_F(D)$  can be viewed as extended from the Zariski topology on $\Spec D$.

Patch closures of subspaces of Loc$_F(D)$ can be intricate to calculate; see for example 
\cite[Section~2]{MR3565813}.  
We work with two classes of subspaces of Loc$_F(D)$, those consisting of valuation rings and those consisting of local rings of points in projective schemes. 
To define the former, 
let  $\Zar_F(D)$ denote the set of valuation rings having quotient field $F$ and containing $D$ as a subring. The {\it Zariski-Riemann space} of $F/D$ is the set $\Zar_F(D)$ endowed with the subspace topology of the Zariski topology on Loc$_F(D)$. With this topology, $\Zar_F(D)$ is a spectral space. This 
follows   from the fact that $\Zar_F(D)$ is a patch closed subspace  of Loc$_F(D$); see \cite[Example 2.2(8)]{MR3565813} and \cite{MR0389876}. 

The other class of subspaces of Loc$_F(D)$ that we consider are those arising from  local rings of points in a projective integral scheme over $\Spec D $ with function field contained in $F$. To simplify the presentation, we view such schemes as projective models of $F/D$. Let $x_1,x_2,\ldots,x_n$ be nonzero elements of $ F$, and for each $i \in \{1,2,\ldots,n\}$, let $D_i = D[x_1x_i^{-1},\ldots,x_nx_i^{-1}]$.  Following \cite[p.~119]{MR0389876}, the {\it projective model of $F/D$ defined by $x_1,\ldots,x_n$} is the subspace of Loc$_F(D)$ with respect to the Zariski topology  consisting of all the localizations of the $D_i$, $i \in \{1,2,\ldots,n\}$, at the prime ideals of~$D_i$. 

Since we will be passing back and forth between projective models and the Zariski-Riemann space, we will typically denote elements of projective models with Greek letters to help distinguish them. 
If $X$ is a  projective model  of $F/D$, then each valuation ring $V$ in $\Zar_F(D)$ {dominates} a unique local ring $\alpha$ in $X$. This property is referred to as {\it irredundance} in \cite[p.~115]{MR0389876}. Also, each local ring $\alpha$ in $X$ is dominated by a valuation ring in $\Zar_F(D)$, a property that is called {\it completeness} in  \cite[p.~127]{MR0389876}. (Irredundance and completeness reflect   the fact that a projective morphism is separated and proper.) 


\begin{lemma} \label{ccs}
{\em \cite[Lemma 4, p.~117]{MR0389876}} For each projective model $X$ of $F/D$, the   mapping $d_X:\Zar_F(D)\rightarrow X$ that sends a valuation ring $V$  in $\Zar_F(D)$ to the unique local ring in $X$ that  $V$ dominates  is surjective  and it is  continuous and closed   with respect to both the Zariski and patch topologies. 
\end{lemma}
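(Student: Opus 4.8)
\emph{Proof proposal.} The plan is to establish, in order, well-definedness and surjectivity of $d_X$, then continuity in the Zariski and patch topologies, then closedness in both. Well-definedness and surjectivity are precisely the irredundance and completeness properties recalled above: each $V\in\Zar_F(D)$ dominates exactly one local ring of $X$, which we call $d_X(V)$, and each local ring of $X$ is dominated by some $V\in\Zar_F(D)$. I also record that, for $\alpha\in X$ and $V\in\Zar_F(D)$, one has $d_X(V)=\alpha$ if and only if $V$ dominates $\alpha$; consequently $d_X(Z)=\{\alpha\in X:\alpha\text{ is dominated by some }V\in Z\}$ for every $Z\subseteq\Zar_F(D)$, which is the description I will use for closedness.

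Say $X$ is defined by $x_1,\dots,x_n$; write $D_i=D[x_1x_i^{-1},\dots,x_nx_i^{-1}]$ and let $X_i=\{(D_i)_P:P\in\Spec D_i\}$ be the $i$-th affine chart, so that $X_i\cong\Spec D_i$ and $X=\bigcup_i X_i$. The first point is the identity
\[
d_X^{-1}(X_i)=\{V\in\Zar_F(D):D_i\subseteq V\}={\mathcal U}(x_1x_i^{-1},\dots,x_nx_i^{-1})\cap\Zar_F(D),
\]
valid because $V$ dominates a localization of $D_i$ exactly when $D_i\subseteq V$, in which case irredundance forces $d_X(V)=(D_i)_{{\mathfrak m}_V\cap D_i}\in X_i$. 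The second point is that, for $g\in D_i$ and the principal open set $D(g)=\{P\in\Spec D_i:g\notin P\}$ of $X_i$, a short computation using $g\in D_i\subseteq V$ yields
\[
d_X^{-1}\bigl(D(g)\cap X_i\bigr)={\mathcal U}(x_1x_i^{-1},\dots,x_nx_i^{-1},g^{-1})\cap\Zar_F(D),
\]
a single basic open subset of $\Zar_F(D)$, hence quasicompact. Since the $X_i$ are quasicompact open subspaces covering $X$ and the sets $D(g)$ form a basis of quasicompact open sets for each $X_i$, the sets $D(g)\cap X_i$ form a basis of quasicompact open sets for $X$; the displayed identity then shows both that $d_X$ is continuous and that $d_X$ pulls quasicompact open sets back to quasicompact open sets, so $d_X$ is also continuous for the patch topologies.

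Closedness for the patch topology is now formal: if $Z$ is patch closed in the spectral space $\Zar_F(D)$, then $Z$ is patch quasicompact, so $d_X(Z)$ is patch quasicompact by patch-continuity, hence patch closed because the patch topology of $X$ is Hausdorff. For the Zariski topology, let $Z$ be Zariski closed; then $Z$ is patch closed, so $d_X(Z)$ is patch closed, and by Lemma~\ref{generic closure} it suffices to show $d_X(Z)$ is stable under specialization in $X$. So suppose $\beta=d_X(V)$ with $V\in Z$ and let $\gamma\in X$ be a specialization of $\beta$, which in the Zariski topology of $X$ means $\gamma\subseteq\beta$. Choosing $i$ with $\gamma\in X_i$, i.e.\ $D_i\subseteq\gamma$, we get $D_i\subseteq\gamma\subseteq\beta\subseteq V$, so $\beta=(D_i)_{Q'}$ and $\gamma=(D_i)_Q$ with $Q'={\mathfrak m}_V\cap D_i\subseteq Q={\mathfrak m}_\gamma\cap D_i$. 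The inclusion $D_i/Q'\hookrightarrow V/{\mathfrak m}_V$ then lets us choose a valuation ring $\overline W$ of the field $V/{\mathfrak m}_V$ dominating the image of $(D_i/Q')_{Q/Q'}$, and the composite valuation ring $W=\{v\in V:(v\bmod{\mathfrak m}_V)\in\overline W\}$ of $F$ satisfies $W\subseteq V$, $D_i\subseteq W$, and ${\mathfrak m}_W\cap D_i=Q$; hence $W$ dominates $\gamma=(D_i)_Q$, so $d_X(W)=\gamma$. Finally $W\subseteq V$, so $W$ lies in the closure of $\{V\}$ in $\Zar_F(D)$ and therefore $W\in Z$; thus $\gamma\in d_X(Z)$, as required.

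I expect the last step to be the main obstacle: lifting a specialization in $X$ (from $\beta$ to $\gamma$) to a specialization in $\Zar_F(D)$ (from $V$ to $W$). It cannot be carried out with a localization of $V$---in the relevant situation ${\mathfrak m}_\gamma$ already generates the unit ideal of $V$---so one is forced to pass to the residue field $V/{\mathfrak m}_V$ and compose $V$ with a valuation ring of that field, which uses the existence of valuation rings dominating a prescribed local domain. The remaining ingredients are the affine-chart computations above and the standard fact that, in a spectral space, a patch-closed, specialization-stable subset is Zariski closed.
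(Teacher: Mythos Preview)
Your proof is correct. The paper, however, does not give its own argument for this lemma: it simply attributes the Zariski-topology assertions (well-definedness, surjectivity, continuity, closedness) to \cite[Lemma~4, p.~117]{MR0389876} and then remarks that the patch-topology continuity and closedness follow from \cite[Theorem~4.1]{MR932760} or \cite[Remark~3.2]{MR3299704}. So your route is genuinely different in that it is self-contained: you compute the preimages of the affine charts and their principal opens explicitly to get Zariski continuity and spectrality in one stroke, deduce patch closedness from patch compactness plus Hausdorffness, and then reduce Zariski closedness to specialization-stability of the image via Lemma~\ref{generic closure}, handling the lifting of a specialization $\beta\rightsquigarrow\gamma$ in $X$ by composing $V$ with a valuation of its residue field. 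What your approach buys is an elementary proof inside the paper's own framework, with the composite-valuation step making transparent why closedness holds; what the paper's approach buys is brevity, outsourcing the work to standard references on projective models and spectral maps.
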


The last assertion that $d_X$ is continuous and closed in the patch topology follows from \cite[Theorem~4.1]{MR932760} or \cite[Remark~3.2]{MR3299704}.
We denote by $X(Z)$ the image of $Z$ in $X$ under the mapping $d_X$.  
If $X$ and $Y$ are projective models of $F/D$, then $Y$ {\it dominates} $X$ if each local ring $\alpha$ in $Y$ dominates a local ring $\beta$ in $X$.

A projective model $X$ can be identified with a locally ringed space (in fact, a projective scheme over $\Spec D$) in a natural way by equipping $X$ with a sheaf ${\mathcal{O}}_X$ such that for each nonempty Zariski open subset ${\mathcal{U}}$ of $X$, ${\mathcal{O}}_X({\mathcal{U}})$ is the intersection of the rings in ${\mathcal{U}}$. 
With this observation the next lemma follows from \cite[Exercise~II.3.2, p. 91]{MR0463157} and
\cite[Lemma~5, p.~119]{MR0389876}. 

\begin{lemma}  If $X$ and $Y$ are projective models of $F/D$ and $Y$ dominates $X$, then the map $d_{Y,X}:Y \rightarrow X$ that sends a local ring $\alpha$ in $Y$ to the unique local ring in $X$ that $\alpha$ dominates is surjective, closed and continuous in the Zariski and patch topologies. Moreover, $d_{Y,X} \circ d_Y = d_X$. 
\end{lemma}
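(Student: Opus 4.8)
The plan is to reduce everything to the properties of the domination maps $d_X$ and $d_Y$ recorded in Lemma~\ref{ccs}, together with the hypothesis that $Y$ dominates $X$ and the irredundance and completeness of projective models.

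First I would check that $d_{Y,X}$ is well defined. Existence of a local ring $\beta$ in $X$ dominated by a given $\alpha$ in $Y$ is exactly the hypothesis that $Y$ dominates $X$; uniqueness follows from completeness of $Y$ and irredundance of $X$, since if $V \in \Zar_F(D)$ dominates $\alpha$ then $V$ dominates any such $\beta$, and $V$ dominates a unique local ring of $X$. The same bookkeeping yields the identity $d_{Y,X} \circ d_Y = d_X$: for $V \in \Zar_F(D)$, the local ring $d_{Y,X}(d_Y(V))$ is dominated by $d_Y(V)$, which in turn is dominated by $V$, so $d_{Y,X}(d_Y(V))$ is a local ring of $X$ dominated by $V$ and hence equals $d_X(V)$ by irredundance of $X$. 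Surjectivity of $d_{Y,X}$ is then immediate, because $d_X = d_{Y,X}\circ d_Y$ is surjective by Lemma~\ref{ccs}.

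For continuity and closedness I would exploit that, by Lemma~\ref{ccs}, $d_Y \colon \Zar_F(D) \to Y$ is a continuous, closed, surjective map in each of the two topologies, hence a quotient map in each. Consequently $d_{Y,X}$ is continuous in a given topology if and only if $d_{Y,X}\circ d_Y = d_X$ is, which holds by Lemma~\ref{ccs}. For closedness, given a closed set $C \subseteq Y$, continuity of $d_Y$ makes $d_Y^{-1}(C)$ closed, closedness of $d_X$ makes $d_X(d_Y^{-1}(C))$ closed, and surjectivity of $d_Y$ gives $d_X(d_Y^{-1}(C)) = d_{Y,X}(d_Y(d_Y^{-1}(C))) = d_{Y,X}(C)$; this argument applies verbatim in both the Zariski and patch topologies.

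I expect no serious obstacle: the content is entirely in Lemma~\ref{ccs} and the formal properties of domination. The one point deserving care is the appeal to the fact that a closed continuous surjection is a quotient map, so that continuity of $d_{Y,X}$ can be transferred across $d_Y$; for the patch topology one can alternatively observe that $Y$ is patch-quasicompact, being the continuous image under $d_Y$ of the patch-quasicompact space $\Zar_F(D)$, while $X$ is patch-Hausdorff, so the patch-continuous map $d_{Y,X}$ is automatically patch-closed. A more geometric route is to note that $d_{Y,X}$ underlies a morphism of projective $\Spec D$-schemes and that a morphism between schemes proper over a common base is proper, hence closed; but the topological argument above is self-contained given the excerpt.
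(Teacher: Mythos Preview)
Your argument is correct. The paper, however, does not spell this out: it simply observes that a projective model carries the structure of a projective $\Spec D$-scheme and then defers to \cite[Exercise~II.3.2]{MR0463157} and \cite[Lemma~5, p.~119]{MR0389876}. Your route is different in character---rather than invoking the scheme-theoretic framework, you derive everything from Lemma~\ref{ccs} and the factorization $d_X = d_{Y,X}\circ d_Y$, using only the general fact that a continuous closed surjection is a quotient map. This is more elementary and entirely self-contained given what the paper has already stated; the paper's approach is terser but relies on the reader translating between the language of models and schemes and tracking down the cited exercises. Either is acceptable, and your quotient-map argument is arguably the cleaner way to handle the patch topology, where the scheme-theoretic references are less directly applicable.
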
 

When equipped with the sheaf whose ring of sections over a  nonempty Zariski open subset  is the intersection of the rings in the  set,  $\Zar_F(D)$ is a locally ringed space and is the inverse limit of projective models. The following lemma is implicit in \cite{MR0389876}. For details, see \cite[Proposition~3.3(4)]{MR3299704}.

\begin{lemma} \label{inverse limit}   The set of projective models $Y$ of $F/D$ that dominate $X$ forms an inverse  system when equipped with the maps $d_{Y,X}$. The inverse limit of this system in the category of locally ringed spaces is $\Zar_F(D)$.  
\end{lemma}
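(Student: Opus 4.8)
The plan is to build the canonical comparison map and show it is an isomorphism of locally ringed spaces, using only the lemmas above together with Chevalley's extension theorem for valuation rings. Let $\mathcal I$ be the collection of projective models $Y$ of $F/D$ that dominate $X$, partially ordered by domination. First I would record that $\mathcal I$ is directed: if $Y_1,Y_2\in\mathcal I$ are defined by $\{x_i\}_i$ and $\{y_j\}_j$, then the model defined by the products $\{x_iy_j\}_{i,j}$ dominates both (a chart $D[\{x_ky_l/(x_iy_j)\}_{k,l}]$ contains $D[\{x_k/x_i\}_k]$ and $D[\{y_l/y_j\}_l]$), hence lies in $\mathcal I$ by transitivity of domination; cf.\ \cite[Lemma~5, p.~119]{MR0389876}. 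Combined with the cocycle identity $d_{Y,Y''}=d_{Y',Y''}\circ d_{Y,Y'}$ for $Y$ dominating $Y'$ dominating $Y''$ (immediate from uniqueness of the dominated local ring in a projective model), this makes $(\{Y\}_{Y\in\mathcal I},\{d_{Y,Y'}\})$ an inverse system over a directed index set, and the maps $d_Y$ form a cone over it, since $d_{Y,Y'}\circ d_Y=d_{Y'}$ as recorded above.

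\textbf{The underlying space.} Let $\varphi\colon\Zar_F(D)\to\varprojlim_{Y\in\mathcal I}Y$ be the continuous map of topological spaces induced by this cone. Everything rests on the identity
\[ V\;=\;\bigcup_{Y\in\mathcal I}d_Y(V)\qquad(V\in\Zar_F(D)), \]
a directed union of local subrings of $F$ containing $D$. Here ``$\supseteq$'' is clear since $V$ dominates each $d_Y(V)$; for ``$\subseteq$'', given $0\ne x\in V$ let $Y_0$ be the model defined by $\{1,x\}$ (with charts $D[x]$ and $D[x^{-1}]$) and let $Y\in\mathcal I$ dominate both $X$ and $Y_0$; then $D[x]\subseteq V$ gives $x\in d_{Y_0}(V)$, and $d_Y(V)$ dominates $d_{Y,Y_0}(d_Y(V))=d_{Y_0}(V)$, so $x\in d_Y(V)$. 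This identity makes $\varphi$ injective, and makes it a topological embedding: each subbasic Zariski-open $\{V:x\in V\}$ of $\Zar_F(D)$ then equals $d_Y^{-1}(\{\alpha\in Y:x\in\alpha\})$ for a suitable $Y\in\mathcal I$, so the Zariski topology on $\Zar_F(D)$ is the initial topology for the $d_Y$, while each $d_Y$ is continuous by Lemma~\ref{ccs}. Surjectivity of $\varphi$ is where Chevalley's theorem enters: given a compatible family $(\alpha_Y)_{Y\in\mathcal I}$, the $\alpha_Y$ form a directed family of local subrings of $F$ containing $D$ with local transition inclusions, so $B:=\bigcup_Y\alpha_Y$ is local; any valuation ring $V$ of $F$ dominating $B$ lies in $\Zar_F(D)$ and has $d_Y(V)=\alpha_Y$ for every $Y$ by irredundance, so $\varphi(V)=(\alpha_Y)$. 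Hence $\varphi$ is a homeomorphism. (One may also deduce this from Lemma~\ref{ccs} via patch-compactness, each $Y$ being patch-compact Hausdorff.)

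\textbf{The structure sheaf.} Recall that $\Zar_F(D)$ and each $Y$ carry the sheaf assigning to a nonempty open set the intersection of the rings it contains. Each $d_Y$ is a morphism of locally ringed spaces: for open $\mathcal U\subseteq Y$ the inclusions $\bigcap_{\alpha\in\mathcal U}\alpha\subseteq\bigcap_{V\in d_Y^{-1}(\mathcal U)}V$ define ${\mathcal O}_Y\to(d_Y)_*{\mathcal O}_{\Zar_F(D)}$, which on the stalk at $V$ is the inclusion $d_Y(V)\hookrightarrow V$, a local homomorphism. The displayed identity upgrades to a stalk identity ${\mathcal O}_{\Zar_F(D),V}=V=\varinjlim_Y{\mathcal O}_{Y,d_Y(V)}$, so the natural map $\big(\varinjlim_Y d_Y^{-1}{\mathcal O}_Y\big)^{\#}\to{\mathcal O}_{\Zar_F(D)}$ is a stalkwise isomorphism, hence an isomorphism. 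Granting these two points, verifying that $(\Zar_F(D),{\mathcal O}_{\Zar_F(D)})$ with the cone $(d_Y)$ satisfies the universal property of the limit in the category of locally ringed spaces is formal: a cone of morphisms $g_Y\colon T\to Y$ of locally ringed spaces yields a unique continuous $g\colon T\to\Zar_F(D)$ by the topological limit property established above, and the required $g^{-1}{\mathcal O}_{\Zar_F(D)}=\big(\varinjlim_Y g_Y^{-1}{\mathcal O}_Y\big)^{\#}\to{\mathcal O}_T$ is forced by the universal properties of colimit and sheafification; it is local on stalks because a filtered colimit of local homomorphisms of local rings along local transition maps is again local. This is the content of \cite[Proposition~3.3]{MR3299704}, going back to \cite{MR0389876}.

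\textbf{Main obstacle.} The genuinely substantive steps are the directed-union description $V=\bigcup_{Y}d_Y(V)$ and, resting on it, the surjectivity of $\varphi$ --- producing a valuation ring from an abstract compatible family via Chevalley --- and then promoting this ring-level identity to the stalk identity used in the structure-sheaf step. By contrast, the categorical bookkeeping around limits of locally ringed spaces is routine, though it must be carried out by hand rather than by quoting a general formula for the limit, since the transition maps $d_{Y,Y'}$ are projective, not affine, and the locality of morphisms on stalks has to be tracked throughout.
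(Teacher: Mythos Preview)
The paper does not supply its own proof of this lemma; it simply records that the statement is implicit in \cite{MR0389876} and refers the reader to \cite[Proposition~3.3(4)]{MR3299704} for details. Your argument is correct and is precisely the standard one those references contain: directedness via the join of projective models, the key identity $V=\bigcup_{Y}d_Y(V)$, the resulting homeomorphism (with Chevalley's theorem for surjectivity), and the stalkwise check for the structure sheaves. In short, you have written out carefully what the paper chose to cite.
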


\section{Valuative connectedness}

In this section we prove one direction of  the valuative connectedness theorem stated in the introduction by proving that if $R$ is a local ring between the local ring $D$ and the field $F$ and $R$ dominates $D$ and is residually algebraic over $D$, then $\Dom_F(R)$ is a closed connected subset of $\Dom_F(D)$.  
This will be done by passing from $\Dom_F(D)$ to the projective models of $F/D$, where we can use Stein factorization to prove connectedness of fibers. 
For this reason, most of the results in this section deal with passing back and forth from $\Dom_F(D)$ to  relevant projective models of $F/D$. 
But first we make explicit a connection between residually algebraic local rings $R$ dominating $D$ and closed sets of $\Dom_F(D)$. Throughout the paper, working with closed connected sets rather than just connected sets is important for our methods. (This situation can always be arranged by replacing $D$ with a local ring between $D$ and $R$, over which $R$ is residually algebraic.)

\begin{lemma} \label{J lemma} Let $D$ be a subring of the field $F$,  let $Z$ be a nonempty subset of $\Zar_F(D)$, and let $\overline{Z}$ be its Zariski closure in $\Zar_F(D)$.   Then $J(Z) = J(\overline{Z})$. 
\end{lemma}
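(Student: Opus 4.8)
The plan is to establish the two inclusions $J(\overline Z)\subseteq J(Z)$ and $J(Z)\subseteq J(\overline Z)$ separately. The first is immediate: since $Z\subseteq\overline Z$, the intersection of the maximal ideals taken over the larger set $\overline Z$ is contained in the intersection taken over $Z$. For the reverse inclusion I would fix a nonzero element $x\in J(Z)$ and show that $x\in\mathfrak m_W$ for every $W\in\overline Z$; the element $0$ lies in every maximal ideal and may be ignored.

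The crux is the observation that the set $S_x:=\{V\in\Zar_F(D):x\in\mathfrak m_V\}$ is patch closed (indeed patch-clopen) in $\Zar_F(D)$. For a valuation ring $V$ of $F$ one has $x\in\mathfrak m_V$ precisely when $x\in V$ and $x^{-1}\notin V$, so
\[
S_x=\bigl(\mathcal U(x)\cap(\mathrm{Loc}_F(D)\setminus\mathcal U(x^{-1}))\bigr)\cap\Zar_F(D).
\]
Since $\mathcal U(x)$ and $\mathcal U(x^{-1})$ are quasicompact open subsets of $\mathrm{Loc}_F(D)$, the set $\mathcal U(x)\cap(\mathrm{Loc}_F(D)\setminus\mathcal U(x^{-1}))$ is patch-clopen in $\mathrm{Loc}_F(D)$; intersecting with the patch-closed subspace $\Zar_F(D)$ shows $S_x$ is patch closed in $\Zar_F(D)$. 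Because $x\in J(Z)$ we have $Z\subseteq S_x$, and hence the patch closure of $Z$ is contained in $S_x$.

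To finish I would pass from the patch closure of $Z$ to its Zariski closure $\overline Z$. By Lemma~\ref{generic closure}, applied to the patch closure of $Z$ inside the spectral space $\Zar_F(D)$ (and noting that the Zariski closure of the patch closure of $Z$ is again $\overline Z$), every $W\in\overline Z$ is a spectral specialization of some $V$ in the patch closure of $Z$. Unwinding the definition of the Zariski topology on $\mathrm{Loc}_F(D)$, for valuation rings of $F$ the relation $V\leq W$ in the specialization order is exactly the reverse containment $W\subseteq V$; and when $W\subseteq V$ one has $\mathfrak m_V\subseteq\mathfrak m_W$, since any $t\in\mathfrak m_V$ satisfies $t^{-1}\notin V\supseteq W$ and therefore $t\in\mathfrak m_W$. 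Thus $S_x$ is stable under specialization, and since the patch closure of $Z$ lies in $S_x$ we conclude $\overline Z\subseteq S_x$, that is, $x\in J(\overline Z)$.

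I do not anticipate a genuine obstacle: the one step requiring a little care is the bookkeeping translation between the specialization order on $\Zar_F(D)$ and ring containment, together with the elementary fact that $W\subseteq V$ forces $\mathfrak m_V\subseteq\mathfrak m_W$ for valuation rings with common quotient field.
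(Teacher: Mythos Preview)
Your argument is correct and follows essentially the same route as the paper: pass to the patch closure of $Z$, then use Lemma~\ref{generic closure} together with the fact that $V\subseteq W$ implies $\mathfrak m_W\subseteq\mathfrak m_V$ to reach the Zariski closure. The only difference is cosmetic: where the paper cites an external result for $J(Z)=J(Z')$ (with $Z'$ the patch closure), you unpack this step by exhibiting $S_x=\{V:x\in\mathfrak m_V\}$ as a patch-clopen set, which makes your version self-contained.
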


\begin{proof}
 The patch closure $Z'$ of $Z$ is contained in $\overline{Z}$. Also, $\bigcap_{V \in Z}{\mathfrak{m}}_V = \bigcap_{V \in Z'}{\mathfrak{m}}_V$ by \cite[Lemma 2.3(1)]{MR3686983}. By Lemma~\ref{generic closure}, each valuation ring $V$ in $\overline{Z}$ is contained in a valuation ring $W$ in $Z'$. Since ${\mathfrak{m}}_W \subseteq  {\mathfrak{m}}_V$, it follows that $\bigcap_{V \in Z}{\mathfrak{m}}_V = \bigcap_{V \in \overline{Z}}{\mathfrak{m}}_V$. 
\end{proof}

\begin{proposition} \label{RA lemma} Let $D$ and $R$ be local subrings of
a field $F$ such that $R$  dominates $D$. 
Then $R$ is residually algebraic over $D$ if and only if $\Dom_F(R)$ is closed in $\Dom_F(D)$.
%
\end{proposition}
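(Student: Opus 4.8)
The plan is to invoke the standard description of closed subsets of a spectral space as those that are both patch-closed and closed under specialization (this is what Lemma~\ref{generic closure} encodes), and to verify these two conditions separately; residual algebraicity will be used only for the specialization condition. Throughout I use the description of $\Dom_F(R)$ as the set of $V\in\Zar_F(D)$ with $r\in V$ for every $r\in R$ and $r^{-1}\notin V$ for every nonzero $r\in\mathfrak{m}_R$; the automatic containment $\Dom_F(R)\subseteq\Dom_F(D)$ (a valuation ring dominating $R$ dominates $D$); the elementary fact that $V'\subseteq V$ forces $\mathfrak{m}_V\subseteq\mathfrak{m}_{V'}$ for valuation rings of $F$; and the fact that in $\Zar_F(D)$ the specialization order is reverse inclusion.

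Regardless of residual algebraicity, $\Dom_F(R)$ is patch-closed in $\Zar_F(D)$: the two defining conditions present it as an intersection of sets of the form $\mathcal{U}(r)\cap\Zar_F(D)$ and $\Zar_F(D)\setminus\mathcal{U}(r^{-1})$, each clopen in the patch topology. So the proposition reduces to relating residual algebraicity to specialization-closedness of $\Dom_F(R)$ inside $\Dom_F(D)$. For the forward implication, suppose $R$ is residually algebraic over $D$, let $V\in\Dom_F(R)$, and let $V'\in\Dom_F(D)$ be a specialization of $V$, so $V'\subseteq V$ and hence $\mathfrak{m}_R\subseteq\mathfrak{m}_V\subseteq\mathfrak{m}_{V'}$; it remains only to show $R\subseteq V'$. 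Given a nonzero $r\in R$ with $r\notin V'$, we have $r^{-1}\in\mathfrak{m}_{V'}$; residual algebraicity furnishes a monic $g\in D[T]$ of degree $d\ge 1$ with $g(r)\in\mathfrak{m}_R\subseteq\mathfrak{m}_{V'}$, and dividing the relation $g(r)=m$ by $r^{d}$ exhibits $1$ as an element of $\mathfrak{m}_{V'}$ — every term on the right, including $m r^{-d}$, lies in $\mathfrak{m}_{V'}$ because $r^{-1}\in\mathfrak{m}_{V'}$ and the coefficients of $g$ lie in $V'$ — a contradiction. Hence $V'\in\Dom_F(R)$, so $\Dom_F(R)$ is patch-closed and specialization-closed in $\Dom_F(D)$; equivalently, $\Dom_F(R)$ equals the intersection with $\Dom_F(D)$ of its $\Zar_F(D)$-closure, which is computed by Lemma~\ref{generic closure}, and so is closed in $\Dom_F(D)$.

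For the converse I argue by contraposition: assuming $R$ is not residually algebraic over $D$, I produce a point of $\Dom_F(D)$ that specializes a point of $\Dom_F(R)$ but lies outside $\Dom_F(R)$. Pick $t\in R$ whose image $\bar t$ in $k_R:=R/\mathfrak{m}_R$ is transcendental over $k_D:=D/\mathfrak{m}_D$, and pick any $V\in\Dom_F(R)$ (nonempty, since every local subring of $F$ is dominated by a valuation ring of $F$). The image of $t$ in the residue field $k_V$ is transcendental over $k_D$, so there is a valuation ring $W$ of $k_V$ with $k_D\subseteq W$ but $\bar t\notin W$, obtained by extending to $k_V$ the valuation at infinity of the rational function field $k_D(\bar t)$. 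Let $V'=\pi^{-1}(W)$ be the corresponding composite valuation ring, where $\pi\colon V\to k_V$ is the residue map. Then $V'\subseteq V$, and $V'$ dominates $D$ because $\pi(D)=k_D\subseteq W$ and $\mathfrak{m}_D\subseteq\mathfrak{m}_V\subseteq\mathfrak{m}_{V'}$, so $V'\in\Dom_F(D)$; but $t\notin V'$ since $\pi(t)\notin W$, so $R\not\subseteq V'$ and $V'\notin\Dom_F(R)$. Thus $V'$ is a specialization in $\Dom_F(D)$ of the point $V\in\Dom_F(R)$ that is not in $\Dom_F(R)$, so $\Dom_F(R)$ is not closed in $\Dom_F(D)$.

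The step I expect to be the main obstacle is pinning down the inclusion $R\subseteq V'$ in the forward direction: both recognizing that the correct reformulation of ``closed'' is closedness under specialization (rather than a direct manipulation of basic open sets) and then extracting the right monic relation from residual algebraicity and clearing the top power of $r$. The remaining ingredients — composite valuations, the valuation extension theorem, reverse inclusion as specialization, and the patch-topology description of closure in spectral spaces — are standard generalities or are already recorded in the paper.
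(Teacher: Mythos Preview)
Your proof is correct and follows the same overall framework as the paper: both note that $\Dom_F(R)$ is always patch closed and then use Lemma~\ref{generic closure} to reduce Zariski closedness in $\Dom_F(D)$ to stability under specialization. For the forward direction the paper packages the key step as ``$R$ is integral over $D+\mathfrak{m}_R$, so every valuation ring containing $D+\mathfrak{m}_R$ contains $R$'' and invokes Lemma~\ref{J lemma} to obtain $\mathfrak{m}_R\subseteq\mathfrak{m}_{V'}$, whereas you unpack the integrality argument explicitly and read off $\mathfrak{m}_R\subseteq\mathfrak{m}_V\subseteq\mathfrak{m}_{V'}$ directly from $V'\subseteq V$; these are the same argument at different levels of detail. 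The converse is where the two genuinely diverge: the paper observes that a nonempty Zariski-closed subset of $\Dom_F(D)$ must contain a valuation ring minimal over $D$, and such a ring is automatically residually algebraic over $D$, hence so is $R$; you instead argue contrapositively, using a transcendental residue to build an explicit composite valuation $V'\subsetneq V$ witnessing failure of specialization-closedness. Your construction is essentially the standard proof of the fact the paper quotes about minimal valuation rings, so the two arguments are dual rather than unrelated; the paper's version is shorter if one is willing to cite that fact, while yours is self-contained.
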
 

\begin{proof} 
 Suppose $R$ is residually algebraic over $D$. Then $R$ is integral over $D+{\mathfrak{m}}_R$, and so every valuation ring of $F$ containing $D+{\mathfrak{m}}_R$ contains $R$. Since $\Dom_F(R)$ is patch closed in $\Dom_F(D)$, the Zariski closure of $\Dom_F(R)$ in $\Dom_F(D)$ is the set of valuation rings $V \in \Dom_F(D)$ for which $V \subseteq U$ for some $U \in \Dom_F(R)$. Let $V$ be  such a valuation ring. 
 By Lemma~\ref{J lemma},
 ${\mathfrak{m}}_R \subseteq J(\Dom_F(R)) \subseteq {\mathfrak{m}}_V$, and  so   
$V$ dominates $R$, which shows $V \in \Dom_F(R)$ and hence $\Dom_F(R)$ is Zariski closed in $\Dom_F(D)$.   

Conversely, if $\Dom_F(R)$ is Zariski closed in $\Dom_F(D)$, there exists a valuation ring $V \in \Dom_F(R)$ that is minimal over $D$, and hence is residually algebraic over $D$. Since $V$ dominates $R$, $R$ is residually algebraic over $D$. 
 \end{proof}

The next lemma will be used to reduce arguments to the case of projective models. 

\begin{lemma} \label{directed models} \label{inverse system}
Let $D$ be a subring of the field $F$,  and let $Z$ be a nonempty subset of $\Zar_F(D)$.   
\begin{enumerate}
\item[$(1)$] The ring $A(Z)$ is the directed union of the rings $A(X(Z)),$ where $X$ ranges over the projective models of $F/D$.

 
\item[$(2)$] The  ring $A({Z})$ is local if and only if $A(X({Z}))$ is local for each projective model $X$ of $F/D$.

\item[$(3)$] If $Z = \bigcap_{i}Z_i$ for    an inverse system $\{Z_i\}$ of   patch closed subsets of $\Zar_F(D)$, then $\textstyle{A(Z)}$ is the directed union of the $ A(Z_i).$
\end{enumerate}
\end{lemma}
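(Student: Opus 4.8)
The plan is to reduce all three parts to the maps $d_X\colon \Zar_F(D)\to X$ of Lemma~\ref{ccs} and the domination maps $d_{Y,X}$ between projective models, together with one elementary fact about directed unions of local rings. I would first assemble the scaffolding common to all three parts. The projective models of $F/D$ form a directed set under domination: if $X$ and $X'$ are generated by $\{x_i\}$ and $\{x'_j\}$, then the model generated by the products $x_ix'_j$ dominates both (cf.\ Lemma~\ref{inverse limit}). Next, $A(X(Z))\subseteq A(Z)$ for every projective model $X$, since each $V\in Z$ dominates, hence contains, the local ring $d_X(V)\in X(Z)$. Finally, if $Y$ dominates $X$ then $A(X(Z))\subseteq A(Y(Z))$: for $t\in A(X(Z))$ and $\alpha\in Y(Z)$, the relation $d_{Y,X}\circ d_Y = d_X$ puts $d_{Y,X}(\alpha)$ in $X(Z)$, and since $\alpha$ dominates $d_{Y,X}(\alpha)$ we get $t\in d_{Y,X}(\alpha)\subseteq\alpha$. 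So $\{A(X(Z))\}_X$ is an upward-directed family of subrings of $F$, all contained in $A(Z)$.

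For \textbf{(1)} it then remains to prove $A(Z)\subseteq\bigcup_X A(X(Z))$. Given $x\in A(Z)$, I would take $X$ to be the projective model of $F/D$ defined by the two elements $x$ and $1$; its affine charts are $D[x^{-1}]$ and $D[x]$. For any valuation ring $V$ containing $x$, the local ring $d_X(V)$ is a localization of $D[x]$ and hence contains $x$; since $x\in V$ for every $V\in Z$, this gives $x\in\beta$ for every $\beta\in X(Z)$, i.e.\ $x\in A(X(Z))$. Together with the previous paragraph, $A(Z)$ is the directed union of the $A(X(Z))$.

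For \textbf{(2)} I would first isolate the elementary fact that a directed union $R=\bigcup_i R_i$ of local subrings of a field is local: the non-units of $R$ are stable under multiplication by $R$, and if $a,b$ are non-units with $(a+b)z=1$ for some $z\in R$, then choosing $R_k$ containing $a,b,z$ forces $a,b\in\mathfrak m_{R_k}$ (their inverses cannot lie in $R\supseteq R_k$), whence $a+b\in\mathfrak m_{R_k}$, contradicting $(a+b)z=1$ in $R_k$. Applying this to $A(Z)=\bigcup_X A(X(Z))$ yields the implication ``$\Leftarrow$''. For ``$\Rightarrow$'' I would argue by contraposition: if $A(X(Z))$ is not local, choose non-units $a,b\in A(X(Z))$ with $a+b$ a unit, and after dividing by the unit $a+b$ assume $a+b=1$. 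A non-unit $a$ of $A(X(Z))$ lies in $\mathfrak m_\beta$ for some $\beta=d_X(V)\in X(Z)$ with $V\in Z$; since $V$ dominates $\beta$, $a\in\mathfrak m_\beta\subseteq\mathfrak m_V$, so $a^{-1}\notin V$ and hence $a^{-1}\notin A(Z)$. Thus $a$ and $b$ are non-units of $A(Z)$ summing to $1$, so $A(Z)$ is not local.

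For \textbf{(3)}, the family $\{A(Z_i)\}$ is upward-directed and each $A(Z_i)\subseteq A(Z)$, so it suffices to show $A(Z)\subseteq\bigcup_i A(Z_i)$. Given $x\in A(Z)$, I would set ${\mathcal{U}}(x)=\{V\in\Zar_F(D):x\in V\}$, which is quasicompact open and therefore patch-clopen. Each $Z_i\setminus{\mathcal{U}}(x)$ is then patch-closed in the patch-compact space $\Zar_F(D)$, the family $\{Z_i\setminus{\mathcal{U}}(x)\}$ is downward-directed, and its intersection is $Z\setminus{\mathcal{U}}(x)=\emptyset$; by compactness some $Z_{i_0}\setminus{\mathcal{U}}(x)$ is empty, i.e.\ $x\in A(Z_{i_0})$. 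The only genuinely delicate point in the whole argument is part (2), where locality must be transferred in both directions: ``$\Leftarrow$'' rests on the elementary lemma about directed unions of local rings, and ``$\Rightarrow$'' on the observation that a witness to non-invertibility inside a projective model can be lifted to a dominating valuation ring of $Z$; everything else is bookkeeping with the maps $d_X$ and $d_{Y,X}$.
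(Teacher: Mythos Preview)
Your proof is correct and follows essentially the same approach as the paper's: the same projective-model and irredundance argument for (1), the same lifting of non-units from $A(X(Z))$ to $A(Z)$ via dominating valuation rings for (2), and the same patch-compactness argument for (3). The only cosmetic differences are that you establish directedness in (1) via the monotonicity $A(X(Z))\subseteq A(Y(Z))$ when $Y$ dominates $X$ (the paper instead handles several elements $x_1,\dots,x_n$ at once), and you argue (2)$\Rightarrow$ by contraposition rather than directly---but the substance is identical.
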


\begin{proof} 
Let $x_1,\ldots,x_n \in A(Z)$, and let $X$ be the projective model defined by the elements $1,x_1,\ldots,x_n$. Then $D[x_1,\ldots,x_n] \subseteq A(Z)$ and the localizations of $D[x_1,\ldots,x_n]$ at prime ideals are in $X$. 
Let $\alpha$ be a local ring in $X(Z)$.  Then $\alpha$ is dominated by a valuation ring $V$ in $Z$.  Since $D[x_1,\ldots,x_n] \subseteq V$, if ${\mathfrak{p}}$ is the center of $V$ in $D[x_1,\ldots,x_n]$, then $D[x_1,\ldots,x_n]_{\mathfrak{p}} \in X(Z)$. By the irredundance property discussed in Section~2, $\alpha = D[x_1,\ldots,x_n]_{\mathfrak{p}}$ since both of these local rings are in $X$ and are dominated by $V$. Thus $x_1,\ldots,x_n \in \alpha$, and since the choice of $\alpha$ in $X(Z)$ was arbitrary,  
  it follows that $x_1,\ldots,x_n \in A(X(Z))$. 
This proves that $A(Z)$ is contained in  the union of the $A(X(Z))$ and this union is directed. To prove the reverse inclusion, 
suppose $x \in A(X(Z))$ for some projective model $X$ of $F/D$. The fact that  $Z \subseteq d_X^{-1}(X(Z))$ implies
$x \in A(d_X^{-1}(X(Z))) \subseteq A(Z)$, and so $A(X(Z)) \subseteq A(Z)$ for all projective models $X$ of $F/D$.

To see that (2) holds, note first that if  $A(X({Z}))$ is local for each projective model $X$ of $F/D$, then $A({Z})$ is a local ring since by (1), $A({Z})$ is the directed union of the rings $A(X({Z}))$, where $X$ ranges over the projective models of $F/D$.
Conversely, suppose $A(Z)$ is local, and let $X$ be a projective model of $F/D$. 
Let $a$ and $b$ be nonunits in $A(X({Z}))$. Since $A(X(Z))$ is an intersection of the  local rings in $X(Z)$, 
there are local rings $\alpha$ and $\beta$ in $X({Z})$ such that $a \in {\mathfrak{m}}_\alpha$ and $b \in {\mathfrak{m}}_\beta$. Since $\alpha,\beta \in X({Z})$, there are 
 valuation rings $V,W \in {Z}$ such that $V$ dominates $\alpha$ and $W$ dominates $\beta$.  Thus 
 $a \in {\mathfrak{m}}_V$ and $b \in {\mathfrak{m}}_W$, and  $a$ and $b$ are nonunits in $A(Z)$.  This implies  $a+b$ is a nonunit in the local ring $A(Z)$ and hence in the subring $A(X({Z}))$ of $A(Z)$, which shows the sum of nonunits in  $A(X({Z}))$ is itself not a unit. Therefore, $A(X({Z}))$ is local.

For the proof of (3),  suppose $Z = \bigcap_i Z_i.$
 Since $Z \subseteq Z_i$ for each $i$, it is clear that $\bigcup_{i}A(Z_i)
\subseteq  A(Z)$. Suppose $x_1,\ldots,x_n \in A(Z)$, and let 
\begin{center}
${\mathcal{W}} = \{V \in \Zar_F(D):x_i \not \in V$ for some $1 \leq i \leq n\}$. 
\end{center}
Then $Z \cap {\mathcal{W}} = \emptyset$ since $x_1,\ldots,x_n \in A(Z)$, and so $\bigcap_{i} (Z_i \cap {\mathcal{W}}) = \emptyset$. 
Now $\Zar_F(D)$ is compact in the patch topology and  
the sets $Z_i \cap {\mathcal{W}}$ are patch closed subsets of  $\Zar_F(D)$, so there exist $i_1,\ldots,i_n$ such that $ Z_{i_1} \cap \cdots \cap Z_{i_n} \cap {\mathcal{W}} =\emptyset$. Choose $i$ such that  $Z_i \subseteq Z_{i_1} \cap \cdots \cap Z_{i_n}$. Then $Z_i \cap {\mathcal{W}} = \emptyset$, which implies $x_1,\ldots,x_n \in A(Z_i)$, proving that   $A(Z)$ is a directed union of the $A(Z_i).$
\end{proof}

If a  subspace  of a topological space  is connected, then so is its closure in the space. The next lemma shows that the converse is true in $\Zar_F(D)$.

\begin{lemma} \label{dense} Let $D$ be a subring of a field $F$, and let $Z$ be a patch closed subset of $\Zar_F(D)$. Then $Z$ is connected if and only if the Zariski closure of $Z$ in $\Zar_F(D)$ is connected. 
%
\end{lemma}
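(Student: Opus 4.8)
The plan is to handle the two implications separately. One is immediate from general topology: the closure of a connected subspace of a topological space is connected, as recorded in the remark just before the lemma, so if $Z$ is connected then so is its Zariski closure $\overline{Z}$ in $\Zar_F(D)$. The content is the converse, which I would prove by contraposition: starting from a disconnection $Z = Z_1 \cup Z_2$ (with $Z_1, Z_2$ nonempty, disjoint, and closed in the Zariski subspace topology on $Z$), I will produce a disconnection of $\overline{Z}$.

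The candidate disconnection is $\overline{Z} = \overline{Z_1} \cup \overline{Z_2}$, where the closures are taken in $\Zar_F(D)$; this equality holds because closure commutes with finite unions, and each $\overline{Z_i}$ is nonempty. So everything comes down to showing $\overline{Z_1} \cap \overline{Z_2} = \emptyset$. To compute these Zariski closures I would first note that each $Z_i$ is patch closed in $\Zar_F(D)$ --- it is the intersection of the patch closed set $Z$ with a Zariski closed (hence patch closed) subset of $\Zar_F(D)$ --- so Lemma~\ref{generic closure} applies and tells us that $\overline{Z_i}$ consists exactly of those valuation rings of $F$ that are contained in some member of $Z_i$. (This is where the hypothesis that $Z$ be patch closed is used.)

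Now suppose toward a contradiction that some valuation ring $V$ lies in $\overline{Z_1} \cap \overline{Z_2}$. Then there are $W_1 \in Z_1$ and $W_2 \in Z_2$ with $V \subseteq W_1$ and $V \subseteq W_2$. The key structural fact is that the valuation overrings of a valuation ring form a chain under inclusion, so $W_1$ and $W_2$ are comparable; say $W_1 \subseteq W_2$, the other case being symmetric. Reading the specialization order of $\Zar_F(D)$ off its basic open sets (a valuation ring lies in $\overline{\{W_2\}}$ precisely when it is contained in $W_2$), we get $W_1 \in \overline{\{W_2\}} \subseteq \overline{Z_2}$. But $W_1 \in Z_1 \subseteq Z$, and since $Z_2$ is closed in $Z$ we have $\overline{Z_2} \cap Z = Z_2$, so $W_1 \in Z_2$, contradicting $Z_1 \cap Z_2 = \emptyset$. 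Hence $\overline{Z_1}$ and $\overline{Z_2}$ are disjoint, and $\overline{Z} = \overline{Z_1} \cup \overline{Z_2}$ exhibits $\overline{Z}$ as disconnected.

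I do not expect a genuine obstacle here: the argument is short once one has Lemma~\ref{generic closure} and the chain property of valuation overrings. The only place that needs a moment's care is verifying that the pieces $Z_i$ of a disconnection of $Z$ remain patch closed in $\Zar_F(D)$, since that is exactly what licenses the use of Lemma~\ref{generic closure}; correspondingly, this is the step where the patch-closedness hypothesis on $Z$ is essential.
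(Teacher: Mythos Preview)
Your proof is correct and follows essentially the same approach as the paper's: argue by contraposition, use patch closedness of the pieces together with Lemma~\ref{generic closure} to describe their Zariski closures as downward-closures under inclusion, and then exploit the fact that valuation overrings of a given valuation ring are linearly ordered to derive a contradiction. The only cosmetic difference is the direction of the WLOG choice and which piece you land the contradiction in.
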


\begin{proof}
The closure of a connected subspace of a topological space is always connected, so we need only show the inverse: if $Z$ is not connected, then neither is its Zariski-closure $\overline{Z}$.
Write $Z$ as the union of disjoint nonempty Zariski-closed subspaces $F_1, F_2$ of $Z$.
Since $\overline{Z}$ is the union of the nonempty Zariski-closed sets $\overline{F_1}, \overline{F_2}$, it suffices to show that $\overline{F_1}, \overline{F_2}$ are disjoint.

Suppose not, so there exists $y \in \overline{F_1} \cap \overline{F_2}$.
Since $Z$ is patch-closed set and $F_1, F_2$ are Zariski-closed subspaces, the sets $F_1, F_2$ are also patch-closed.
By Lemma~\ref{generic closure}, there exists $x_1 \in F_1, x_2 \in F_2$ such that $x_1 \le y$ and $x_2 \le y$.
Since the overrings of a valuation ring are totally ordered by inclusion, either $x_1 \le x_2$ or $x_2 \le x_1$, so assume without loss of generality that $x_1 \le x_2$.
But then, since $x_1 \in F_1$ and $F_1$ is a Zariski-closed subspace of $Z$, it follows that $x_2 \in F_1$, contradicting the fact that $F_1, F_2$ are disjoint.
\end{proof}



In general, an inverse limit of connected sets need not be connected. However, connectedness does lift to the inverse limit under the  hypotheses that we work with in this article. We give the topological argument for this in the next lemma, which  is surely known but for which we could not find a reference.
 
\begin{lemma} \label{spectral connected 1}
Let $S$ be the inverse limit of topological spaces $X_i$ such that the projection maps $d_i : S \rightarrow X_i$ are closed maps, and let $Z \subseteq S$ be a nonempty closed quasicompact subset.
Then $Z$ is connected if and only if $d_i (Z)$ is connected for each~$i$. 
\end{lemma}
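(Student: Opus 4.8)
The forward implication is immediate and I would dispose of it first: the projection maps $d_i$ are continuous (being the structure maps of the inverse limit), and a continuous image of a connected space is connected, so if $Z$ is connected then every $d_i(Z)$ is connected. The real content is the converse, which I would prove in contrapositive form: assuming $Z$ is disconnected, I will exhibit an index $i$ for which $d_i(Z)$ is disconnected.

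The plan rests on one structural observation about $S=\varprojlim_i X_i$: because the index set is directed, the sets $d_i^{-1}(U)$, with $U$ open in $X_i$, form a \emph{basis} (not merely a subbasis) for the topology of $S$. Indeed, a finite intersection $\bigcap_{k=1}^{n} d_{i_k}^{-1}(U_k)$ can be rewritten as $d_j^{-1}\big(\bigcap_{k=1}^{n}\phi_{j i_k}^{-1}(U_k)\big)$ for any $j$ lying above $i_1,\dots,i_n$, where the $\phi_{j i_k}\colon X_j\to X_{i_k}$ are the transition maps. With this in hand, write $Z=A\sqcup B$ with $A,B$ nonempty and relatively closed in $Z$. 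Since $Z$ is closed in $S$, both $A$ and $B$ are closed in $S$; since $Z$ is quasicompact, both $A$ and $B$ are quasicompact (closed subsets of a quasicompact space).

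Now $S\setminus B$ is open and contains the quasicompact set $A$, so I can cover $A$ by finitely many basic open sets each disjoint from $B$, and then, using directedness as above, collapse them into a single set $d_{j_0}^{-1}(V)$ with $V$ open in $X_{j_0}$, $A\subseteq d_{j_0}^{-1}(V)$, and $d_{j_0}^{-1}(V)\cap B=\emptyset$. Running the symmetric argument gives $d_{j_1}^{-1}(W)$ with $W$ open in $X_{j_1}$, $B\subseteq d_{j_1}^{-1}(W)$, and $d_{j_1}^{-1}(W)\cap A=\emptyset$. Pulling both back along transition maps to a common index $i\ge j_0,j_1$ produces opens $V',W'\subseteq X_i$ with $A\subseteq d_i^{-1}(V')$, $d_i^{-1}(V')\cap B=\emptyset$, $B\subseteq d_i^{-1}(W')$, and $d_i^{-1}(W')\cap A=\emptyset$. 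From the first pair I get $d_i(A)\subseteq V'$ and $d_i(A)\cap W'=\emptyset$ (a point of $A$ mapping into $W'$ would lie in $d_i^{-1}(W')\cap A$), and symmetrically $d_i(B)\subseteq W'$ and $d_i(B)\cap V'=\emptyset$. Hence $d_i(A)$ and $d_i(B)$ are nonempty, disjoint, cover $d_i(Z)$, and satisfy $d_i(A)=V'\cap d_i(Z)$ and $d_i(B)=W'\cap d_i(Z)$, so each is relatively open in $d_i(Z)$; therefore $d_i(Z)$ is disconnected, the desired contradiction. (Alternatively, one can invoke here that $d_i$ is a closed map to conclude directly that $d_i(A)$ and $d_i(B)$ are closed in $X_i$.)

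The only step requiring genuine care is the collapse of a finite family of basic open sets to a single preimage over one index: this is precisely where directedness of the system is used, and it is what distinguishes this situation from a general inverse limit of connected spaces, which can fail to be connected. The remaining ingredients are the elementary facts that a closed subset of a quasicompact space is quasicompact and that a closed subset of a closed subspace is closed in the ambient space.
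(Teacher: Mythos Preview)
Your proof is correct, and it takes a genuinely different route from the paper's.

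The paper argues the converse directly rather than by contraposition: given $Z=F_1\cup F_2$ with $F_1,F_2$ nonempty closed, it uses the hypothesis that each $d_i$ is a \emph{closed} map to conclude that $d_i(F_1),d_i(F_2)$ are closed in $X_i$; connectedness of $d_i(Z)$ then forces $d_i(F_1)\cap d_i(F_2)\neq\emptyset$. The sets $Z_i=Z\cap d_i^{-1}(d_i(F_1)\cap d_i(F_2))$ are shown (via directedness) to have the finite intersection property, and quasicompactness of $Z$ yields a point in $\bigcap_i Z_i$, which is then identified with $F_1\cap F_2$ using the basis description of closed sets in $S$.

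Your argument instead exploits the basis structure up front: quasicompactness of $A$ and $B$ lets you separate them by single sets of the form $d_i^{-1}(V')$ and $d_i^{-1}(W')$ over a common index, and the disconnection descends immediately. Two points are worth highlighting. First, your proof never uses the hypothesis that the $d_i$ are closed maps (you note this yourself in the parenthetical), so you have actually established a stronger statement than the lemma as written; the paper's proof, by contrast, uses closedness of $d_i$ both to make $d_i(F_1)\cap d_i(F_2)$ closed and to ensure the $Z_i$ are closed for the finite intersection argument. Second, the step you flag as ``the only step requiring genuine care'' is indeed the essential one: the observation that a finite \emph{union} of basic opens $d_{i_k}^{-1}(U_k)$ can be rewritten as a single $d_j^{-1}(V)$ over a common upper bound $j$ is what makes the separation descend to one index, and it is the precise place where directedness enters.
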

 
\begin{proof}
Since the continuous image of a connected set is connected, if $Z$ is connected, then so is $d_i (Z)$ for each $i$.
Conversely, assume that $d_i ({Z})$ is connected for each $i$.
To show $Z$ is connected, for any pair of nonempty closed subsets $F_1, F_2 \subseteq Z$ such that $Z = F_1 \cup F_2$, we claim that $F_1 \cap F_2 \ne \emptyset$.
For each $i$, since $d_i$ is a closed map, we have that $d_i (F_1), d_i (F_2)$ are closed sets in $X_i$.
Since $d_i ({Z})$ is connected and $d_i ({Z}) = d_i (F_1) \cup d_i (F_2)$, the intersection $d_i (F_1) \cap d_i (F_2)$ must be nonempty.
For each $i$, define $Z_i$ to be the preimage of this intersection in $Z$, that is
    $$Z_i = Z \cap  d_i^{-1} (d_i (F_1) \cap d_i (F_2)).$$
Notice that for any $i, j$, if $i \ge j$, then $Z_i \subseteq Z_j$.
To see this, let $d_{j, i} : X_i \rightarrow X_j$ denote the compatibility map, where $d_{j, i} \circ d_i = d_j$.
Since $d_i (Z_i) \subseteq d_i (F_1) \cap d_i (F_2)$, by applying the map $d_{j, i}$, we obtain $d_j (Z_i) \subseteq d_j (F_1) \cap d_j (F_2)$.
Thus,
    $$Z_i \subseteq Z \cap d^{-1}_j (d_j (F_1) \cap d_j (F_2)) = Z_j.$$
This implies that the collection $\{ Z_i \}$ has the finite intersection property.
Indeed, let $j_1, \ldots, j_n$ be in the index set, and choose $j$ with $j \ge j_k$ for each $k$, $1 \le n$.
Then $Z_j$, which is nonempty, is a subset of each $Z_{j_k}$, and thus a subset of their intersection.

Since $\{ Z_i \}$ is a set of closed subsets of $Z$ with the finite intersection property and $Z$ is quasicompact, the intersection $\bigcap_{i} Z_i$ is nonempty. 
From the fact that $F_1, F_2$ are closed it follows\footnote{The general fact being used here is:
{\it
    Let $S$ be a topological space  that is an inverse limit of topological spaces $X_i$, and for each $i$ let $d_i: S \rightarrow X_i$ denote the projection map.
    If $Z$ is a closed subset of $S$, then $Z = \bigcap_{i} d_i^{-1}(d_i(Z))$.
}
To see that this is the case, observe that since the sets of the form $d_i^{-1} (Y)$, where $Y$ is a closed set in $X_i$, comprise a basis for the closed sets of $S$, it follows that $Z$ is the intersection of sets of the form $d_i^{-1}(Y)$, where $d_i(Z) \subseteq Y$ and $Y$ is closed in $X_i$. 
For such a set $Y$ we have $Z \subseteq d^{-1}_i(d_i(Z)) \subseteq d_i^{-1}(Y)$, and so $Z$ is the intersection of the sets of the form $d_i^{-1}(d_i(Z))$. 
 }
that $F_k =\bigcap_{i} d_i^{-1} (d_i (F_k))$ for $k = 1, 2$.
Thus 
\begin{eqnarray*} 
    \bigcap_i Z_i 
        &= & Z \cap \left( \bigcap_i d_i^{-1}(d_i(Z_1) \cap d_i(Z_2))\right) \\
        &= & Z \cap \left( \bigcap_i  d_i^{-1}(d_i(Z_1))\right) \cap \left( \bigcap_i d_i^{-1}( d_i(Z_2))\right) \\ 
        &= & Z_1 \cap Z_2,
\end{eqnarray*}
proving that $Z_1 \cap Z_2 \ne \emptyset$.
\end{proof}
 
 
\begin{lemma} \label{connected iff} Let $D$ be a subring of the field $F$.  A nonempty patch closed  subset ${Z}$ of $\Zar_F(D)$ is Zariski connected   if and only if $X({Z})$ is Zariski connected  for each projective model $X$ of $F/D$.  
\end{lemma}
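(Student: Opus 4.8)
The plan is to prove the two implications separately. The forward implication is immediate: if $Z$ is Zariski connected, then for every projective model $X$ of $F/D$ the set $X(Z)=d_X(Z)$ is the continuous image of a connected set under the Zariski-continuous map $d_X$ of Lemma~\ref{ccs}, hence is Zariski connected. The reverse implication is where the work lies, and I would deduce it from Lemma~\ref{spectral connected 1} after passing to the Zariski closure of $Z$.

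In detail, assume $X(Z)$ is Zariski connected for every projective model $X$ of $F/D$, and let $\overline Z$ be the Zariski closure of $Z$ in $\Zar_F(D)$. By Lemma~\ref{dense} it suffices to prove that $\overline Z$ is Zariski connected. Since $\Zar_F(D)$ is a spectral space, $\overline Z$ is a nonempty quasicompact Zariski-closed subset, and by Lemma~\ref{inverse limit} it lies in the inverse limit of the projective models of $F/D$, whose projection maps $d_X$ are Zariski closed by Lemma~\ref{ccs}. Thus Lemma~\ref{spectral connected 1} applies and reduces the problem to showing that $d_X(\overline Z)$ is Zariski connected for each projective model $X$. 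But $d_X$ is both Zariski continuous and Zariski closed, so $d_X(\overline Z)$ is exactly the Zariski closure in $X$ of $d_X(Z)=X(Z)$; being the closure of the connected set $X(Z)$, it is connected. Hence $\overline Z$, and therefore $Z$, is Zariski connected.

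The step I expect to require the most care is the invocation of Lemma~\ref{spectral connected 1}, which is formulated for a topological inverse limit, whereas Lemma~\ref{inverse limit} only asserts that $\Zar_F(D)$ is an inverse limit of projective models in the category of locally ringed spaces. To bridge this I would verify that each basic open set $\mathcal U(x_1,\ldots,x_n)\cap\Zar_F(D)$ equals the $d_X$-preimage of the open set $\{\alpha\in X : x_1,\ldots,x_n\in\alpha\}$, where $X$ is the projective model defined by $1,x_1,\ldots,x_n$ (using irredundance, exactly as in the proof of Lemma~\ref{directed models}); it follows that the sets $d_X^{-1}(\mathcal V)$, for $\mathcal V$ open in a projective model $X$, form a basis for the Zariski topology on $\Zar_F(D)$, which is precisely the input needed in the proof of Lemma~\ref{spectral connected 1}---in particular for the ``general fact'' quoted in its footnote. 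Once this is checked, the rest of the argument is a direct assembly of the lemmas already in hand.
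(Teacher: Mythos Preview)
Your proof is correct and follows essentially the same route as the paper's own argument: the forward implication by continuous image, and the reverse by passing to $\overline{Z}$, observing that $d_X(\overline{Z})=\overline{X(Z)}$ is connected, applying Lemma~\ref{spectral connected 1} to conclude $\overline{Z}$ is connected, and then Lemma~\ref{dense} to descend to $Z$. Your extra paragraph verifying that the Zariski topology on $\Zar_F(D)$ really is the inverse-limit topology (so that Lemma~\ref{spectral connected 1} and its footnote apply) is a point the paper leaves implicit, and your handling of it is fine.
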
 

\begin{proof} 
If $Z$ is connected, then as a continuous image of $Z$, $X(Z)$ is  connected. Conversely, suppose $X(Z)$ is connected for all projective models $X$ of $F/D$.  Since the domination map $d_X$ is closed, the Zariski closure of $X(Z)$ in $X$ is $X(\overline{Z})$, where $\overline{Z}$ is the Zariski closure of $Z$ in $\Zar_F(D)$. Since a space containing a dense connected subspace is connected, $X(\overline{Z})$ is connected for all projective models $X$ of $F/D$.  
By Lemma~\ref{inverse limit}, the set of projective models of $F/D$ is an inverse system with respect to the domination maps for these models, and the inverse limit of this system is $\Zar_F(D)$. The domination maps $d_i:\Zar_F(D) \rightarrow X_i$ are the projection maps for the inverse limit, and by Lemma~\ref{ccs} these continuous maps are closed. Thus Lemma~\ref{spectral connected 1} implies $\overline{Z}$ is connected. By Lemma~\ref{dense}, $Z$ is connected. 
 \end{proof}

Using the previous lemmas, we now     prove that local rings give rise to connected sets by reducing to   the case of projective models, where Stein factorization is available. 

\begin{theorem} \label{first direction} Suppose $D \subseteq R$ are local subrings of a field $F$ such that $R$ dominates $D$, $R$ is integrally closed in $F$ and $R$ is residually algebraic over $D$. Then   $\Dom_F(R)$ is a closed connected subset   of $\Dom_F(D)$.  
\end{theorem}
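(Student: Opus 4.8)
The plan is to prove the closedness and connectedness assertions separately, using the lemmas already established to transfer both questions to the level of projective models. Closedness is immediate: since $R$ dominates $D$ and $R$ is residually algebraic over $D$, Proposition~\ref{RA lemma} gives at once that $\Dom_F(R)$ is closed in $\Dom_F(D)$. (Note also that $\Dom_F(R)$ is patch closed in $\Dom_F(D)$, so it is a patch closed subspace of $\Zar_F(D)$ to which Lemma~\ref{connected iff} applies.) So the substance of the theorem is connectedness.

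For connectedness, I would apply Lemma~\ref{connected iff} to the patch closed set $Z = \Dom_F(R)$: it suffices to show that $X(Z)$ is Zariski connected for each projective model $X$ of $F/D$. Fix such an $X$. The key identification is that $X(Z) = X(\Dom_F(R))$ is exactly the set of local rings in $X$ dominated by some valuation ring dominating $R$, and this should be describable as the fiber over the closed point of $\Spec D$ (or the closed point of $\Spec R$) of an appropriate projective morphism. Concretely, I would build a projective model $X'$ of $F/R$ dominating $X$ — for instance, if $X$ is defined by elements $x_1,\dots,x_n$ of $F$, let $X'$ be the projective model of $F/R$ defined by the same elements — together with the domination map $X' \to X$. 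Since $R$ dominates $D$ and is residually algebraic over $D$, the composite $X' \to X \to \Spec D$ and the map $X' \to \Spec R$ have the same fibers over the relevant closed points up to the integral (hence finite, hence connectedness-preserving) extension $D + \mathfrak m_R \hookrightarrow R$; and because $R$ is integrally closed in $F$, Stein factorization (in the non-Noetherian form of \cite[Tag~03H2]{stacks-project}) applies to the projective morphism $X' \to \Spec R$ to show that the fiber over $\mathfrak m_R$ is connected. That fiber maps onto $X(\Dom_F(R))$ under the closed continuous map $X' \to X$, so $X(\Dom_F(R))$, being the continuous image of a connected set, is connected. By Lemma~\ref{connected iff}, $\Dom_F(R)$ is connected.

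The main obstacle I expect is the bookkeeping in the previous paragraph: verifying carefully that $X(\Dom_F(R))$ really is (the image of) the closed fiber of a projective morphism over a local ring that is integrally closed in the function field, so that Stein factorization genuinely applies. Two points need care. First, one must check that passing from $D$ to $R$ does not change which local rings of $X$ lie under valuation rings that dominate $R$ — i.e., that a valuation ring $V \in \Zar_F(D)$ dominates $R$ if and only if $V \in \Dom_F(R)$ under the appropriate reading, using that $R$ is residually algebraic over $D$ so that $R$ is integral over $D + \mathfrak m_R$ and hence every valuation ring containing $D + \mathfrak m_R$ contains $R$; this is the same mechanism used in the proof of Proposition~\ref{RA lemma}. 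Second, one must ensure the scheme on which Stein factorization is invoked is an integral projective scheme with function field inside $F$ over $\Spec R$ with $R$ integrally closed in $F$, which is exactly the hypothesis packaged for us; the integral closedness of $R$ in $F$ is what forces the finite part of the Stein factorization to be trivial on the level of the generic fiber and thus makes the closed fiber connected rather than merely having finitely many connected components.

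Finally, I would remark that the reduction via Lemma~\ref{connected iff} is what lets us avoid any Noetherian hypothesis: we never need $\Zar_F(D)$ itself to carry a scheme structure of finite type, only each projective model $X$, and the non-Noetherian Stein factorization handles the projective morphisms $X' \to \Spec R$ directly. The closedness half, via Proposition~\ref{RA lemma}, is genuinely routine; essentially all the difficulty is concentrated in setting up the correct projective morphism over $\Spec R$ so that the classical (non-Noetherian) connectedness of closed fibers can be quoted.
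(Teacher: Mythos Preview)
Your proposal is correct and follows essentially the same strategy as the paper: reduce connectedness of $\Dom_F(R)$ to connectedness of its images in projective models via Lemma~\ref{connected iff}, then invoke the non-Noetherian Stein factorization over $\Spec R$; closedness is handled by Proposition~\ref{RA lemma} in both.

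The one simplification you are missing is that the paper applies Lemma~\ref{connected iff} with $R$ in place of $D$. Since connectedness of $\Dom_F(R)$ is intrinsic and the subspace topology inherited from $\Zar_F(D)$ agrees with that from $\Zar_F(R)$, it suffices to check that $X(Z)$ is connected for each projective model $X$ of $F/R$. For such an $X$, the set $X(\Dom_F(R))$ is \emph{directly} the fiber of $X \to \Spec R$ over the closed point, so Stein factorization applies immediately. This eliminates the auxiliary model $X'$, the domination map $X' \to X$, and all of the ``bookkeeping'' you flag as the main obstacle; your detour through projective models of $F/D$ works but is unnecessary.
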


\begin{proof}

Let $Z=\Dom_F(R)$. To show $Z$ is connected, it suffices by  Lemma~\ref{connected iff} to show $X(Z )$ is connected for each projective model $X$ of $F/R$. Let $X$ be a projective model of $F/R$. 
As follows from the property of completeness discussed in Section~2 (or from the valuative criterion for properness), the image $X(Z)$ of $Z$ in $X$ under the domination map $d_X:\Zar_F(D) \rightarrow X$ is the fiber of
 the map $X \rightarrow \Spec R$ over the closed point 
  of $\Spec R$. Since  $R$ is integrally  closed in $F$, 
$R$ is integrally closed in the   
   function field of $X$. 
Now we apply the non-Noetherian version of Stein factorization from \cite[{Tag~03H2}]{stacks-project}, which in this case implies that since $R$ is  local and integrally closed in $F$, the fiber $X(Z)$ of the map
  $X \rightarrow \Spec R$ over the closed point 
  of $\Spec R$ is
     connected in the Zariski topology.
This proves
    ${Z}$ is connected. That $Z$ is closed follows from Proposition~\ref{RA lemma}. 
\end{proof}

\section{Local intersections over closed and connected sets} 

The purpose of this section is to prove a   converse to  Zariski's  connectedness theorem (Theorem~\ref{proj case}) and to lift this converse to spaces of valuation rings (Theorem~\ref{is local}).  


\begin{theorem} \label{proj case} 
Let $D$ be a local subring of a field $F$ such that $\Spec D$ is a Noetherian space, and let $X$ be a projective model of $F/D$.
\begin {enumerate}
\item[$(1)$]
If $Y$ is a Zariski closed subset of the set of local rings in $X$ that dominate $D$, then $A(Y)$ is a semilocal ring whose maximal ideals are contracted from the maximal ideals of the rings in $Y$.  
\item[$(2)$]
If in addition  $Y$ is connected,  then $A({Y})$ is a   local   ring that is dominated by each ring in $Y$. 
\end{enumerate}
\end{theorem}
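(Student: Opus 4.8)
The plan is to reduce to the case that $Y$ is irreducible and then reassemble, so I begin by recording the geometric setup. Since every ring in $Y$ dominates $D$, the set $Y$ is a Zariski-closed subset of the fiber $X_0$ of the structure morphism $X \to \Spec D$ over the closed point $\mathfrak{m}_D$, and this fiber is a projective scheme over the residue field $k := D/\mathfrak{m}_D$, hence a Noetherian space. Consequently (assuming $Y \ne \emptyset$, the empty case being trivial) $Y$ has finitely many irreducible components $Y_1, \dots, Y_r$, each closed in $X$ with a generic point $\eta_j \in Y_j$, and $A(Y) = A(Y_1) \cap \cdots \cap A(Y_r)$. It therefore suffices to analyze $A(Y_j)$ for each $j$ and then to understand the finite intersection.

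The core of the argument is the irreducible case, which I would treat first. Suppose $Y$ is irreducible with generic point $\eta$; write $\alpha_\eta$ for the local ring of $X$ at $\eta$ and $L = \kappa(\eta)$ for the function field of the integral projective $k$-scheme $Y_{\mathrm{red}}$. For every $\alpha \in Y$ one has $\alpha \subseteq \alpha_\eta$, and composing the inclusion $A(Y) \hookrightarrow \alpha$ with the reduction $\alpha \twoheadrightarrow \mathcal{O}_{Y_{\mathrm{red}}, \alpha} \hookrightarrow L$ yields maps $\phi_\alpha \colon A(Y) \to L$ that are compatible under generization; since $Y$ is irreducible they all coincide with one ring homomorphism $\phi \colon A(Y) \to L$ whose kernel is $\mathfrak{m}_{\alpha_\eta} \cap A(Y)$. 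The image of $\phi$ is contained in each local ring $\mathcal{O}_{Y_{\mathrm{red}}, \alpha}$, hence in $H^0(Y_{\mathrm{red}}, \mathcal{O}_{Y_{\mathrm{red}}})$, and because $Y_{\mathrm{red}}$ is integral and proper over $k$ this ring of global sections is a finite field extension of $k$; since also $\phi(D) \cong D/(\mathfrak{m}_{\alpha_\eta} \cap D) = k$, the image of $\phi$ is a domain squeezed between $k$ and a finite extension of $k$, so it is a field. Thus $\mathfrak{m}_{\alpha_\eta} \cap A(Y)$ is a maximal ideal of $A(Y)$, and a direct check finishes the case: if $g \in A(Y)$ lies outside $\mathfrak{m}_{\alpha_\eta} \cap A(Y)$ then $\phi(g)$ is a unit in the field $\phi(A(Y))$, so the image of $g$ in each $\mathcal{O}_{Y_{\mathrm{red}}, \alpha}$ is a unit, whence $g \notin \mathfrak{m}_\alpha$ and $g^{-1} \in \alpha$ for all $\alpha \in Y$, i.e.\ $g^{-1} \in A(Y)$. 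So $A(Y)$ is local with maximal ideal $\mathfrak{m}_{\alpha_\eta} \cap A(Y)$; and since $\mathfrak{m}_{\alpha_\eta} \cap A(Y) \subseteq \mathfrak{m}_\alpha$ for every $\alpha \in Y$ (otherwise some $x \in \mathfrak{m}_{\alpha_\eta}$ would be a unit in $\alpha \subseteq \alpha_\eta$), $A(Y)$ is dominated by each ring in $Y$.

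To obtain (1) in general, put $B_j = A(Y_j)$ with maximal ideal $\mathfrak{n}_j = \mathfrak{m}_{\beta_j} \cap B_j$, where $\beta_j = \alpha_{\eta_j} \in Y$. An element $g \in A(Y) = \bigcap_j B_j$ is a nonunit precisely when $g \in \mathfrak{m}_\alpha$ for some $\alpha \in Y$; because $B_j$ is dominated by every ring in $Y_j$, this occurs precisely when $g \in \mathfrak{m}_{\beta_j} \cap A(Y)$ for some $j$. Hence the nonunits of $A(Y)$ form the finite union $\bigcup_j (\mathfrak{m}_{\beta_j} \cap A(Y))$ of prime ideals, and prime avoidance forces each maximal ideal of $A(Y)$ to equal some $\mathfrak{m}_{\beta_j} \cap A(Y)$, which is contracted from the maximal ideal of the ring $\beta_j \in Y$; this is (1). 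For (2), suppose $Y$ is connected. Then the graph on $\{1, \dots, r\}$ joining $a$ and $b$ whenever $Y_a \cap Y_b \ne \emptyset$ is connected, and whenever $\gamma \in Y_a \cap Y_b$ the domination of both $B_a$ and $B_b$ by $\gamma$ gives $\mathfrak{m}_{\beta_a} \cap A(Y) = \mathfrak{m}_\gamma \cap A(Y) = \mathfrak{m}_{\beta_b} \cap A(Y)$. Therefore all the ideals $\mathfrak{m}_{\beta_j} \cap A(Y)$ coincide, the nonunits of $A(Y)$ form a single ideal, $A(Y)$ is local, and $\mathfrak{m}_\alpha \cap A(Y)$ equals this ideal for every $\alpha \in Y$, so $A(Y)$ is dominated by each ring in $Y$.

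The step I expect to be the main obstacle is the irreducible case, and within it the assertion that the quotient of $A(Y)$ by the prime $\mathfrak{m}_{\alpha_\eta} \cap A(Y)$ is a \emph{field}: this is precisely where properness of the closed fiber enters, through the finiteness of global sections of a proper scheme over a field (the scheme-theoretic fact that also underlies Stein factorization). Everything else --- the passage to irreducible components, the prime-avoidance argument for semilocality, and the incidence-graph bookkeeping in the connected case --- is then formal commutative algebra, as is the verification that $Y$ really lies in the closed fiber and that this fiber is a Noetherian space.
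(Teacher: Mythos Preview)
Your proof is correct and follows the same overall strategy as the paper: decompose $Y$ into irreducible components using Noetherianness, invoke projectivity over the residue field $k$ to see that the relevant contracted primes are maximal, and apply prime avoidance for semilocality. The underlying key fact is the same in both arguments---global sections of a projective integral $k$-scheme form a finite extension of $k$---though you phrase it as finiteness of $H^0(Y_{\mathrm{red}},\mathcal{O})$ while the paper phrases it as the reductions $(\alpha+\mathfrak{m}_{U_i})/\mathfrak{m}_{U_i}$ forming a projective model over $k$ whose intersection is therefore integral over $k$.

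The two presentations diverge in two places. First, you package the irreducible case as a standalone lemma (``$A(Y_j)$ is local and dominated by every $\alpha\in Y_j$'') and then intersect; the paper instead works throughout with $A=A(Y)$ and shows directly that each $P_i=\mathfrak{m}_{U_i}\cap A$ is maximal. Your packaging buys a cleaner statement to reuse, at the cost of proving slightly more than needed. Second, for (2) you argue via the incidence graph of the components: if $\gamma\in Y_a\cap Y_b$ then domination forces $\mathfrak{m}_{\beta_a}\cap A(Y)=\mathfrak{m}_\gamma\cap A(Y)=\mathfrak{m}_{\beta_b}\cap A(Y)$, and graph-connectedness propagates this. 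The paper instead argues by contradiction: if the $P_i$ are not all equal, pick $x\in P_1$ and $y\in P_2\cap\cdots\cap P_n$ with $x+y=1$, and observe that the closed sets $\mathcal{V}(x^{-1})\cap Y$ and $\mathcal{V}(y^{-1})\cap Y$ disconnect $Y$. Your incidence-graph argument is more constructive and perhaps more transparent; the paper's disconnection argument is closer in spirit to the classical proof of Zariski's connectedness theorem and makes the role of connectedness of $Y$ slightly more visible topologically.
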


\begin{proof} 
Since $\Spec D$ is Noetherian, each finitely generated $D$-algebra has  Noetherian prime spectrum \cite[Section~2]{MR229627}, and it follows that $X$, as a finite union of Noetherian spaces, is a Noetherian space. Thus $Y$, as a closed subset of $X$, is the union of finitely many closed irreducible subsets ${Y}_1,{Y}_2,\ldots,{Y}_n$ of $X$. 
For each $i$, let $U_i$ be the local ring that is the unique generic point of ${Y}_i$.  Then $U_i$ is 
essentially of finite type over $D$, and  $U_i$  dominates $D$. Also, ${Y}_i = \{\alpha \in {Y}:\alpha \subseteq U_i\}$. 
%
The Zariski closed set ${Y}$ thus consists of the rings in $X$ that are contained in at least one of the $U_i$. 


 
 Let $A = A({Y})$.
Then $A \subseteq U_1 \cap \cdots \cap U_n$. For each $i \in \{1,2,\ldots,n\}$,  let $P_i = {\mathfrak{m}}_{U_i} \cap A$. We claim each $P_i$ is a maximal ideal of $A$. 
Observe that for each $i$, $$
D / \mathfrak{m}_D =
(D+{\mathfrak{m}}_{U_i})/{\mathfrak{m}}_{U_i}
\subseteq 
(A+{\mathfrak{m}}_{U_i})/{\mathfrak{m}}_{U_i}
\subseteq \bigcap_{\alpha \in {Y}_i}(\alpha+ {\mathfrak{m}}_{U_i})/{\mathfrak{m}}_{U_i} \subseteq U_i/{\mathfrak{m}}_{U_i}.$$
Since $U_i$ dominates $D$, the maximal ideal ${\mathfrak{m}}_{U_i}$ of $U_i$ contracts in $D$ to the maximal ideal of $D$. 
By \cite[(6.2.19),p.~161]{MR1617523}, the collection $\{(\alpha+{\mathfrak{m}}_{U_i})/{\mathfrak{m}}_{U_i} \mid \alpha \in {Y}_i\}$ is a projective model over the residue field of $D$. Since the intersection of local rings in a projective model is contained in the intersection of all the valuation rings of the function field of the model, the extension     $$(D+{\mathfrak{m}}_{U_i})/{\mathfrak{m}}_{U_i} \subseteq
\bigcap_{\alpha \in {Y}_i}(\alpha+ {\mathfrak{m}}_{U_i})/{\mathfrak{m}}_{U_i}$$ 
is an integral extension whose base ring is a field.
This implies the intermediate ring $(A+{\mathfrak{m}}_{U_i})/{\mathfrak{m}}_{U_i}$ 
  is a field. Since $P_i = A \cap {\mathfrak{m}}_{U_i}$, we conclude $P_i$ is a maximal ideal of~$A$.


Next we claim that the $P_i$ are all the maximal ideals of $A$. 
Let $x \in A$ be a nonunit. Since $A = A (Y)$, there exists $\alpha \in Y$ such that $x \in \mathfrak{m}_{\alpha}$.
Since $\alpha \subseteq U_i$ for some $i$ and  $\alpha \cap \mathfrak{m}_{U_i} \subseteq \mathfrak{m}_{\alpha}$,
it follows that $ P_i = A \cap \mathfrak{m}_{U_i} \subseteq A \cap \mathfrak{m}_{\alpha}$.
Since $P_i$ is a maximal ideal of $A$, we conclude that  $P_i = A \cap \mathfrak{m}_{\alpha}$, and so $x \in P_i$. This proves that every nonunit in $A$ is contained in one of the $P_i$. Using prime avoidance, we conclude that each proper ideal in $A$ is contained in one of the $P_i$, and hence the $P_i$ are all the maximal ideals of $A$.  This proves item 1.

  
For item 2,   suppose $Y$ is connected. To prove that $A(Y)$ is local, we show that $P_1 = P_2 = \cdots = P_n$.
Suppose this is not the case. Since the $P_i$ are maximal ideals we may assume without loss of generality that $P_1 \ne P_2 \cap \cdots \cap P_n$.
Since each $P_i$ is maximal, there exist nonzero elements $x \in P_1$ and $y \in P_2 \cap \cdots \cap P_n$ such that $x + y = 1$.
For an element $t\in F$, let $${\mathcal{V}}(t) =\{ \alpha \in X \mid t \notin \alpha \},$$ and note that ${\mathcal{V}}(t)$ is a Zariski closed subset of $X$.
Now $U_1 \in {\mathcal{V}}(x^{-1})$ and $U_2,\ldots,U_n \in {\mathcal{V}}(y^{-1})$, so 
since $U_1,\ldots,U_n$ are the generic points for the irreducible closed components of ${Y}$, we have  ${Y} \subseteq {\mathcal{V}}(x^{-1}) \cup {{\mathcal{V}}}(y^{-1})$. 
To see that $Y \cap \mathcal{V} (x^{-1}) \cap \mathcal{V} (y^{-1}) = \emptyset$, suppose $\alpha \in {Y} \cap  {\mathcal{V}}(x^{-1}) \cap {\mathcal{V}}(y^{-1})$.
Since $x, y \in A \subseteq \alpha$ and $x^{-1}, y^{-1} \notin \alpha$, both $x$ and $y$ are nonunits in $\alpha$ with $x + y = 1$, contradicting the locality of~$\alpha$.
From this we conclude ${Y}$ is the disjoint union of the nonempty closed subsets $ {\mathcal{V}}(x^{-1}) \cap {Y}$ and ${\mathcal{V}}(y^{-1}) \cap {Y}$, a contradiction to the fact that  ${Y}$ is connected. Therefore, $P_1 = P_2 = \cdots = P_n$, which proves $A$ is local. Moreover, $A$ is dominated by each $\alpha \in {Y}$ since the previous argument shows  $P_1  = A 
\cap {\mathfrak{m}}_\alpha$.
\end{proof}

We illustrate Theorem~\ref{proj case} with an example from an interesting  geometric setting. The details for the example can be found in \cite[Example~7.6]{MR4097907}.

\begin{example} Let $D$ be a two-dimensional regular local ring with quotient field $F$, and let ${\mathfrak m} =(x,y)D$ be the maximal ideal of $D$.  The ring  $R = D[y^2/x]_{(x,y,y^2/x)D[y^2/x]}$ is a two-dimensional Noetherian normal local domain that is not regular, since its maximal ideal ${\mathfrak{m}}_R=(x,y,y^2/x)R$ requires three generators. The blow-up  Proj $R[{\mathfrak m}_Rt]$ of ${\mathfrak m}_R$ is the desingularization of $R$, which we may view as a  projective model of $F/R$. It is a subset of the  projective model $X= $ Proj $D[Jt]$, where $J = (x^2, xy, y^3)D$. Let $Y$ be the set of local rings in $X$ that dominate $R$. Then $Y$ is a closed subset of $X$, and it is connected since it is irreducible. 
Moreover, $R = A(Y)$, as in 
 Theorem~\ref{proj case}.
\end{example}

More generally, if $D$ is a local Noetherian domain and $J$ is an ideal with blowup  $X = $ {Proj~$D[Jt]$}, then for each closed connected subset $Y$ of the fiber in $X$ over the maximal ideal of $D$, the local rings in $Y$ 
 intersect to a local ring, according to Theorem~\ref{proj case}. In particular, the irreducible closed sets in $Y$ yield local rings.

We   lift  Theorem~\ref{proj case} to the Zariski-Riemann space of valuation rings by using the fact from Lemma~\ref{inverse limit} that this locally ringed space is the inverse limit   of projective models.

\begin{theorem} \label{is local}
Let $D$ be a local subring of the field $F$.
If ${Z}$ is a nonempty Zariski closed and connected subspace of  $\Dom_F(D)$, then $A({Z})$ is a local domain that dominates $D$ and is residually algebraic over $D$. 
\end{theorem}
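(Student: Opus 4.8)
The strategy is to reduce Theorem~\ref{is local} to the projective case already established in Theorem~\ref{proj case}, using the inverse-limit description of $\Zar_F(D)$ from Lemma~\ref{inverse limit} together with the directed-union machinery of Lemma~\ref{directed models}. First I would reduce to the case where $\Spec D$ is Noetherian: replace $D$ by a Noetherian local subring $D_0 \subseteq D$ of $F$ over which nothing essential changes. More precisely, since $D$ is a subring of $F$, it is a directed union of its finitely generated (hence Noetherian) subrings, and localizing an appropriate such subring at the contraction of ${\mathfrak m}_D$ gives a Noetherian local ring $D_0$ with ${\mathfrak m}_{D_0} = {\mathfrak m}_D \cap D_0$; one checks $\Dom_F(D) = \Dom_F(D_0)$ and $Z$ is still Zariski closed and connected as a subset of $\Dom_F(D_0)$, while $A(Z)$ is unchanged. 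So we may assume $\Spec D$ is Noetherian from the outset, which is exactly the hypothesis of Theorem~\ref{proj case}.

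Next I would invoke Lemma~\ref{directed models}(1): $A(Z) = \bigcup_X A(X(Z))$, a directed union over the projective models $X$ of $F/D$. For each such $X$, the set $X(Z) = d_X(Z)$ is the image of a Zariski closed connected set under the closed continuous map $d_X$ (Lemma~\ref{ccs}), hence is Zariski closed and connected in $X$; moreover every ring in $X(Z)$ dominates $D$ because every valuation ring in $Z \subseteq \Dom_F(D)$ does. Therefore Theorem~\ref{proj case}(2) applies and tells us $A(X(Z))$ is a local ring dominated by each ring in $X(Z)$. By Lemma~\ref{directed models}(2), the directed union $A(Z)$ of these local rings is itself local. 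It is a domain since it is a subring of the field $F$, and it dominates $D$: each generator of ${\mathfrak m}_D$ lies in ${\mathfrak m}_{A(X(Z))}$ for every $X$ (being a nonunit of $D$, it is a nonunit of every ring in $X(Z)$, hence of $A(X(Z))$), so it lies in ${\mathfrak m}_{A(Z)}$.

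Finally, residual algebraicity of $A(Z)$ over $D$: by Proposition~\ref{RA lemma} it suffices to show $\Dom_F(A(Z))$ is Zariski closed in $\Dom_F(D)$. Since $A(Z) = A(\overline{Z}) = A(Z)$ with $Z$ already closed, and since $Z \subseteq \Dom_F(A(Z))$ (each $V \in Z$ contains $A(Z)$ by definition of $A(Z)$ and dominates it because $A(Z)$ is local with $J(Z) = {\mathfrak m}_{A(Z)}$ — here I would use Lemma~\ref{J lemma} and the fact, just proved, that every ring in $Z$ dominates the local ring $A(Z)$), it is enough to show the reverse containment $\Dom_F(A(Z)) \subseteq Z$. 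If $V \in \Dom_F(A(Z))$, then $V$ contains $A(Z) = \bigcup_X A(X(Z))$; using Lemma~\ref{generic closure} and the patch-closedness of $Z$, one shows that $V$ specializes from (or to) a valuation ring whose center in each projective model $X$ lies in $X(Z)$, which forces $V$ to dominate a ring in $X(Z)$ for every $X$ and hence, by the inverse-limit property, to lie in $Z$. Alternatively and more cleanly: $A(Z)$ is an intersection of valuation rings dominating it, so it equals $A(\Dom_F(A(Z)))$, and combining this with $Z \subseteq \Dom_F(A(Z))$ and $A(Z) = A(\Dom_F(A(Z)))$ together with the fact that $Z$ is closed and $\Dom_F(A(Z))$ is patch closed, Lemma~\ref{directed models}(3) applied to an inverse system cutting out $\Dom_F(A(Z))$ shows $A(Z)$ is minimal among such intersections, which pins down $\Dom_F(A(Z))$ as the closure of $Z$, namely $Z$ itself.

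\textbf{Main obstacle.} The genuinely delicate point is the last step — residual algebraicity, equivalently that $\Dom_F(A(Z))$ is Zariski closed, equivalently $\Dom_F(A(Z)) = Z$. The containments $A(Z) \subseteq A(X(Z))$-compatibility are formal, but showing no valuation ring outside $Z$ dominates $A(Z)$ requires carefully combining the inverse-limit description (Lemma~\ref{inverse limit}), the generic-closure description of Zariski closure (Lemma~\ref{generic closure}), and the fact that $Z$ is already closed; getting the direction of specialization right and handling the possibility that a $V \supseteq A(Z)$ is properly contained in some member of $Z$ is where care is needed.
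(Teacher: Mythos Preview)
Your middle portion---applying Lemma~\ref{ccs} and Theorem~\ref{proj case} to each $X(Z)$ and then Lemma~\ref{directed models} to assemble---matches the paper's approach. But there are two genuine gaps, one at each end.

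\textbf{The reduction step is wrong.} You cannot pick a single Noetherian local $D_0\subseteq D$ and assert $\Dom_F(D)=\Dom_F(D_0)$; that equality is false whenever $D_0\subsetneq D$, since a valuation ring containing $D_0$ need not contain $D$. Even viewing $Z$ inside the larger space $\Dom_F(D_0)$, the set $Z$ is only Zariski closed in $\Dom_F(D)$, not in $\Dom_F(D_0)$, so Theorem~\ref{proj case} does not apply to its images directly. The paper fixes this by using the \emph{entire} directed family $\{D_i\}$ of local rings essentially of finite type over the prime subring and dominated by $D$. For each $i$ it takes the Zariski closure $Z_i$ of $Z$ in $\Dom_F(D_i)$; then $Z_i$ is closed and connected (it contains the dense connected set $Z$), so $X(Z_i)$ is closed and connected for every projective model $X$ of $F/D_i$, and Theorem~\ref{proj case} yields $A(X(Z_i))$ local. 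Crucially, $Z=\bigcap_i Z_i$, and Lemma~\ref{directed models}(3)---the inverse-system clause you did not invoke---gives $A(Z)=\bigcup_i A(Z_i)$, whence $A(Z)$ is local. A single $D_0$ only gives $A(Z_0)\subseteq A(Z)$, which is not enough.

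\textbf{The residual-algebraicity argument is both unnecessary and unjustified.} You attempt to prove $\Dom_F(A(Z))=Z$, but the sketch (``$V$ specializes from or to a valuation ring whose center lies in $X(Z)$ for every $X$'') never establishes that $d_X(V)\in X(Z)$ from $A(X(Z))\subseteq V$; knowing $V$ dominates the local ring $A(X(Z))$ does not force its center in $X$ to lie in the particular closed set $X(Z)$, only in the set of points of $X$ dominating $A(X(Z))$, which may be larger. The paper avoids this entirely: since $Z$ is Zariski closed in $\Dom_F(D)$, it contains a valuation ring $V$ minimal over $D$, and such a $V$ is residually algebraic over $D$. Then $D+\mathfrak m_V\subseteq A(Z)+\mathfrak m_V\subseteq V$ shows the residue field of $A(Z)$ sits between those of $D$ and $V$, so $A(Z)$ is residually algebraic over $D$. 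Two lines, no need to identify $\Dom_F(A(Z))$.
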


\begin{proof}
Let $D'$ be the prime subring of $D$. 
We may write $D $ as a directed union $D= \bigcup_{i} D_i$, where for each $i$, $D_i$ is a local ring essentially of finite type over $D'$ and dominated by $D$.  
Then $\Dom_F(D) = \bigcap_{i} \Dom_F(D_i)$.
 For each $i$, let 
 $Z_i$ be the Zariski closure of ${Z}$ in  $\Dom_F(D_i)$. Then $Z_i \cap \Dom_F(D) = Z$ since $Z$ is Zariski closed in $\Dom_F(D)$, so 
  $Z = \bigcap_i Z_i$. Since $\{Z_i\}$ is an inverse system of Zariski closed (hence patch closed) subsets of $\Zar_F(D)$,
  Lemma~\ref{inverse system}(3) implies $A(Z)$ is a directed union of the $A(Z_i)$. Thus by Lemma~\ref{directed models}(1), $A(Z)$ is the directed union of the rings $A(X(Z_i))$, where $X$ ranges over the  projective models of $F/D_i$ and $i$ ranges over the index set of $\{D_i\}$. 
  
  To prove that $A(Z)$ is local it suffices to prove that $A({X}(Z_i))$ is local for each~$i$ and  each projective model $X$.  
  Let $i$ be in the index set, and let $X$ be a projective model of $F/D_i$. 
 Since $Z_i$ contains the dense connected set $Z$, $Z_i$ is connected. 
 For each   projective model $X$ of $F/D_i$, the map $d_X:\Zar_F(D_i) \rightarrow X$  that sends a valuation ring to the unique local ring it dominates in $X$ is by Lemma~\ref{ccs}  continuous, closed and surjective.
 Thus, since $Z_i$ is    closed and connected, so is the subset $d_{X}(Z_i)=X(Z_i)$ of the set of local rings in $X$ dominating $D_i$,
 and hence by Theorem~\ref{proj case},  $A(X(Z_i))$ is a local ring. Therefore, $A(Z)$ is a local ring.  

 Finally, since each valuation ring in $Z$ dominates $D$, we have ${\mathfrak{m}}_D \subseteq J(Z) \subseteq {\mathfrak{m}}_{A(Z)}$, and so $A(Z)$  dominates $D$.  
 Let $U$ be a valuation ring in $Z$. Since $Z$ is closed in $\Dom_F(D)$, there is a valuation ring $V \subseteq U$ such that $V \in Z$, $V$ is minimal over $D$ and hence $V$ is residually algebraic over $D$.  Thus since $D +{\mathfrak{m}}_V \subseteq A(Z)+{\mathfrak{m}}_V  \subseteq V$, it follows that $A(Z)$ is residually algebraic over $D$. 
\end{proof}

Combining Theorems~\ref{first direction} and~\ref{is local}, we obtain the valuative connectedness theorem from the introduction.

 Theorem~\ref{is local}, and hence the valuative connectedness theorem,  are not true if the assumption that $Z$ is Zariski closed is removed.  Example~\ref{Nagata example} shows that even  patch closed in place of Zariski closed  is not strong enough for the theorem to remain valid.

\begin{example} \label{Nagata example} Let $D$ be a Noetherian normal local domain with Krull dimension at least $2$ and quotient field $F$. Let $x$ be an indeterminate for $F$, and for each subring $R$ of $F$, let $R(x)$ denote the Nagata function ring of $R$, i.e.\ $R(x)$ is the set of rational functions $f(x)/g(x)$ in $F(x)$, where $f(x),g(x) \in R[x]$ and the ideal generated by the coefficients of $g$ is $R$. Let $Z = \Dom_F(D)$. By Theorem~\ref{first direction}, $Z$ is connected in the Zariski topology.  Let $Z^* = \{V(x):V \in Z\}$. Then $Z^*$ is homeomorphic to $Z$ (cf.~\cite[Proposition~4.2(b)]{MR3299704}) and patch closed in $\Dom_{F(x)}(D(x))$ \cite[Proposition~5.6(6)]{MR3299704}, yet $A(Z^*)$ is a B\'ezout  domain whose maximal ideals are the centers of the minimal valuation rings in $Z^*$ \cite[Corollary~5.8]{MR3299704}. 
Since $D$ is Noetherian and  has dimension at least $2$, there are infinitely many minimal valuation rings in $Z$, hence in $Z^*$, and so $A(Z^*)$ has infinitely many maximal ideals.  
Thus $Z^*$ is a patch closed, Zariski connected subset of $\Dom_{F(x)}(D(x))$ for which $A(Z^*)$ is not local. 
\end{example} 

As the example shows,  Theorem~\ref{is local} can fail if $Z$ is not Zariski closed. However,  the theorem can still be applied  
to the Zariski closure of $Z$ and information about $A(Z)$  obtained.

 \begin{corollary} \label{is local corollary} Let $D$ be a local subring of the field $F$ such that $D$ is not a field. If $Z$ is a nonempty connected subspace of $\Dom_F(D)$, then
 there is a local integrally closed subring of $A(Z)$ whose complete integral closure contains $A(Z)$.  

 \end{corollary}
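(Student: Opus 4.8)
\section*{Proof proposal for Corollary~\ref{is local corollary}}

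The plan is to apply Theorem~\ref{is local} to the Zariski closure $\overline{Z}$ of $Z$ in $\Dom_F(D)$ and then show that $A(Z)$ lies in the complete integral closure of $R := A(\overline{Z})$. First, $\overline{Z}$ is a nonempty Zariski closed subspace of $\Dom_F(D)$, and being the closure of the connected set $Z$ it is connected; hence Theorem~\ref{is local} shows that $R = A(\overline{Z})$ is a local domain (which moreover dominates $D$ and is residually algebraic over $D$). As an intersection of valuation rings of $F$, the ring $R$ is integrally closed in $F$, in particular in its own quotient field. Since $Z \subseteq \overline{Z}$, we have $R \subseteq A(Z)$, so $R$ is a local integrally closed subring of $A(Z)$; it remains to prove that $A(Z)$ is contained in the complete integral closure of $R$.

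For that step I would use that $D$ is not a field to fix a nonzero $c \in \mathfrak{m}_D \subseteq D \subseteq R$ and show that $c x^n \in R$ for every $x \in A(Z)$ and every $n \ge 0$, which exhibits each such $x$ as almost integral over $R$. Let $Z'$ be the patch closure of $Z$. Because quasicompact open subsets of $\Zar_F(D)$ are patch closed, $A(Z) = A(Z')$, and by Lemma~\ref{generic closure} the Zariski closure $\overline{Z}$ consists exactly of the valuation rings in $\Dom_F(D)$ contained in some member of $Z'$. Fix $W \in \overline{Z}$ and choose $V \in Z'$ with $W \subseteq V$; then $x \in A(Z') \subseteq V$, while $c \in \mathfrak{m}_D \subseteq \mathfrak{m}_V$ because $V$ dominates $D$. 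Let $\nu$ be the valuation attached to $W$ and $\Delta$ the convex subgroup of its value group corresponding to the overring $V$. If $\nu(x) \ge 0$ then $x \in W$ and $c x^n \in W$; otherwise $\nu(x) < 0$, and since $x \in V$ this forces $\nu(x) \in \Delta$. In this remaining case $c \in \mathfrak{m}_V$ gives $\nu(c) > 0$ with $\nu(c) \notin \Delta$, so $\nu(c)$ exceeds every element of $\Delta$ and in particular $-n\nu(x) \in \Delta$; hence $\nu(c x^n) = \nu(c) + n\nu(x) > 0$ and $c x^n \in W$. As $W$ was arbitrary, $c x^n \in A(\overline{Z}) = R$, as required.

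I expect the crux to be this last computation. The underlying point is that passing from $Z$ to its Zariski closure $\overline{Z}$ only adds valuation rings $W$ lying below members $V$ of the patch closure $Z'$, i.e.\ higher-rank refinements of such $V$; for $x \in A(Z)$ the only obstruction to $x \in W$ is that $\nu(x)$ be a negative element of the convex subgroup of the value group of $W$ cut out by $V$, and a single nonzero $c \in \mathfrak{m}_D$ — which lies in $\mathfrak{m}_V$ for every $V$ dominating $D$, hence has $\nu$-value positive, outside that subgroup, and therefore larger than every element of it — uniformly corrects for this across all such $W$ and all powers $x^n$. This is precisely where the hypothesis that $D$ is not a field enters. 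The accompanying bookkeeping with patch closures is routine, since $\Dom_F(D)$ is patch closed in $\Zar_F(D)$ and the description of Zariski closure via specialization is recorded in Lemma~\ref{generic closure}.
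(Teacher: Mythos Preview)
Your proof is correct and follows the same route as the paper: take the Zariski closure $\overline{Z}$, apply Theorem~\ref{is local} to obtain the local integrally closed ring $R=A(\overline{Z})\subseteq A(Z)$, and then use a nonzero $c\in\mathfrak m_D$ to show each element of $A(Z)$ is almost integral over $R$. The only difference is cosmetic: the paper invokes Lemma~\ref{J lemma} and asserts $cA(Z)\subseteq A(\overline{Z})$ directly (implicitly using that for $W\in\overline{Z}$ with $W\subseteq V\in Z'$ one has $c\in\mathfrak m_V$, $A(Z)=A(Z')\subseteq V$, and $\mathfrak m_V\subseteq W$), whereas you unpack this same fact via the convex-subgroup computation.
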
 
 
 \begin{proof} 
 Let $\overline{Z}$ denote the Zariski closure of $Z$ in $\Dom_F(D)$.
 Then $\overline{Z}$ is connected since it has a dense connected subspace, and so $A(\overline{Z})$ is by Theorem~\ref{is local} an integrally closed local domain.
 By Lemma~\ref{J lemma}, $J(Z) = J(\overline{Z})$. 
 Since the valuation rings in $Z$ dominate $D$ and $D$ is not a field, there is $0 \ne x \in \bigcap_{V \in Z}{\mathfrak{m}}_V$. Consequently, $xA(Z) \subseteq A(\overline{Z})$, so that $A(Z)$ is in the complete integral closure of $A(\overline{Z})$. \end{proof}

\printbibliography

\end{document}